\documentclass[12pt, oneside]{amsart}
\usepackage[utf8]{inputenc}
\usepackage{amstext,amsmath, amssymb, amsthm, mathrsfs, mathtools}
\usepackage[a4paper, total={6.2in, 9.1in}, left={1.3in}, right={1.3in}, top={1.1in},bottom={1.3in}, footskip={.5in}]{geometry}
\usepackage{enumerate}
\usepackage{xcolor}
\usepackage{comment}
\usepackage{hyperref}
\usepackage[capitalise]{cleveref}
\usepackage{etoolbox}
\usepackage{stmaryrd}

\usepackage{setspace}
\setlength{\parskip}{4pt plus 1pt minus 1pt} 
\setlength{\parindent}{20pt} 
\linespread{1.15} 
\allowdisplaybreaks

\makeatletter \def\l@subsection{\@tocline{1}{0pt}{3pc}{2pc}{}} 
\makeatother

\DeclareRobustCommand{\SkipTocEntry}[5]{} 

\usepackage[backend=bibtex]{biblatex}
\addbibresource{Bibliography.bib}




\newtheorem{theorem}{Theorem}[section]
\newtheorem{lemma}[theorem]{Lemma}
\newtheorem{rmk}[theorem]{Remark}
\newtheorem{proposition}[theorem]{Proposition}

\theoremstyle{definition} 

\numberwithin{equation}{section}

\DeclareMathOperator{\nR}{\mathbb{R}}
\DeclareMathOperator{\nE}{\mathbb{E}}

\DeclareMathOperator{\nN}{\mathbb{N}}

\DeclareMathOperator{\tr}{tr}

\DeclareMathOperator{\Rm}{Rm}
\DeclareMathOperator{\Ric}{Ric}
\DeclareMathOperator{\R}{R}

\DeclareMathOperator{\Div}{div}


\DeclareMathOperator{\C}{C}

\makeatletter
\newtheorem*{rep@theorem}{\rep@title}
\newcommand{\newreptheorem}[2]{%
\newenvironment{rep#1}[1]{%
 \def\rep@title{#2 \ref{##1}}%
 \begin{rep@theorem}}%
 {\end{rep@theorem}}}
\makeatother

\newreptheorem{theorem}{Theorem}

\newreptheorem{corollary}{Corollary}

\usepackage{thmtools}
\usepackage{thm-restate}



\newtheorem*{corA1}{Corollary A.1}
\newtheorem*{corB1}{Corollary B.1}
\newtheorem*{corB2}{Corollary B.2}

\newtheorem{corA}{Corollary}

\newtheorem{corB}{Corollary}

\newreptheorem{corA}{Corollary}
\newreptheorem{corB}{Corollary}

\title[]{Regularity of conformal structures \\ on closed 3-manifolds}

\author{Rodrigo Avalos}
\address{Eberhard Karls Universit\"at T\"ubingen, Fachbereich Mathematik, Auf der Morgenstelle 10, 72076 T\"{u}bingen, Germany}

\email{rodrigo.avalos@mnf.uni-tuebingen.de}
\email{andoni.royo-abrego@uni-tuebingen.de}

\author{Albachiara Cogo}
\address{Centro di Ricerca Matematica Ennio de Giorgi, Scuola Normale Superiore di Pisa, Piazza dei Cavalieri 3, 56126, Pisa, Italy}
\email{albachiara.cogo@sns.it}

\author{Andoni Royo Abrego}

\begin{document}

\begin{abstract}
    It is well known in Riemannian geometry that the metric components have the best regularity in harmonic coordinates. These can be used to characterize the most regular element in the isometry class of a rough Riemannian metric. In this work, we study the conformal analogue problem on closed 3-manifolds: given a Riemannian metric $g$ of class $W^{2,q}$ with $q > 3$, we characterize when a more regular representative exists in its conformal class. We highlight a deep link to the Yamabe problem for rough metrics and present some immediate applications to conformally flat, static and Einstein manifolds.
\end{abstract}

\maketitle

\pagestyle{plain}

\section[{\textbf{Introduction and main results}}]{Introduction and main results}

A classic problem in geometric analysis is that of finding a good \textit{gauge} to study a particular problem. Since most equations of interest are geometric and thus intrinsically defined, there is, in principle, no canonical coordinate system in which to express them explicitly. On the other hand, the analytical techniques used to prove results such as the existence of solutions often depend on finding a suitable choice of coordinates. Well-known examples of this are \cite{ChoquetBruhat} for the vacuum Einstein field equations, \cite{Uhlenbeck} for Yang--Mills connections or \cite{DeTurckRF} for Ricci flow. In this work, we are concerned with finding smooth representatives of a Riemannian metric of rough regularity among all the elements in its isometry and conformal classes.

Let $M$ be a smooth, closed 3-manifold and consider a Riemannian metric $g \in W^{k,q}(M)$ for $k \in \nN$, $k \geq 2$ and $q > 3$. Let $\mathscr D^{l,p}(M)$ denote the space of all $W^{l,p}$ diffeomorphisms from $M$ to $M$  for $l \in \nN$ and $1 < p < \infty$. We then define
\begin{equation}
    \label{eq: conformal class def}
    \llbracket\,g\,\rrbracket_{W^{k,q}} \coloneqq \Bigl\{\Phi^*\bigl(u^4g\bigr) \; : \; u \in W^{k,q}(M) , \; u > 0, \; \Phi \in \mathscr D^{k+1,q}(M)\Bigr\} \,.
\end{equation}
Such equivalence classes are elements of the moduli space of $W^{k,q}$ Riemannian metrics on $M$ modulo the action of the diffeomorphism and conformal groups of the corresponding suitable regularity. It follows from Sobolev multiplication properties and coordinate transformation rules for tensors that \eqref{eq: conformal class def} is a well-defined equivalence class --see \cref{lemma_Adams}--.

There are instances where $\llbracket\,g\,\rrbracket_{W^{k,q}}$ is expected to have smooth representatives: for example, if we pull-back a Riemannian metric $g \in C^\infty(M)$ by a diffeomorphism in $\mathscr D^{k+1,q}(M)$, or multiply it by a $W^{k,q}(M)$ conformal factor, we certainly obtain a rough Riemannian metric, but the class $\llbracket\,g\,\rrbracket_{W^{k,q}}$ still contains a $C^\infty(M)$ metric. It is then a natural question to ask whether there exist conformal conditions for a rough metric $g$ such that $\llbracket\,g\,\rrbracket_{W^{k,q}}$ admits a more regular representative. Let $\C_g$ denote the Cotton tensor of $g$ --see \eqref{eq: Cotton def} for the definition--. The main result of this paper is the following:

\begin{restatable}[\textbf{Regularity of conformal classes}]{thm}{thmA}
    \label{thm: main}
    Let $M$ be an orientable, smooth, closed 3-manifold and consider a Riemannian metric $g \in W^{k,q}(M)$ for $k \geq 2$ and $q > 3$. Suppose $\C_g \in W^{l,q}(M)$ for some $l \in \nN_0$, $l \leq k$. Then, there exists a metric $\tilde g \in \llbracket\,g\,\rrbracket_{W^{k,q}}$ of constant scalar curvature such that $\tilde g \in W^{l+3,q}(M)$.
\end{restatable}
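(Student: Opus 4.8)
The plan is to proceed in two conceptually distinct stages. First, I would use the conformal freedom to find a representative in the conformal class with improved regularity, exploiting the fact that the Cotton tensor is a conformal invariant (in dimension 3) that controls the obstruction to conformal flatness, and crucially that it is more regular than one might naively expect. Second, I would invoke the solution of the Yamabe problem for rough metrics to pass to a constant scalar curvature representative without losing the regularity gained in the first stage.

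\begin{proof}[Proof sketch]
The key structural input is the behavior of the Cotton tensor in dimension $3$. For a $3$-manifold, $\C_g$ is (up to a conformal factor) the divergence of the Schouten tensor, and it encodes the full conformal curvature, since the Weyl tensor vanishes identically. The first step is to extract a regularity equation for the metric from the hypothesis $\C_g \in W^{l,q}$. Writing the Cotton tensor in terms of the Ricci and scalar curvatures, one sees that $\C_g$ involves one derivative of $\Ric$, and hence two derivatives of $g$ beyond the metric's own regularity; schematically, $\C_g$ being of class $W^{l,q}$ should force a third-order elliptic condition on $g$ once a suitable gauge is fixed. The plan is to pull back $g$ by a diffeomorphism $\Phi \in \mathscr{D}^{k+1,q}(M)$ placing the metric in \emph{harmonic coordinates}, where $\Delta_g x^i = 0$ and the components $g_{ij}$ satisfy an elliptic system $\Delta_g g_{ij} = Q_{ij}(g,\partial g) - 2\Ric_{ij}$. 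This is exactly the device that gives optimal regularity for rough metrics in the isometry class, and it is already implicit in the definition \eqref{eq: conformal class def} through the diffeomorphism factor.

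The heart of the argument is to combine the harmonic-coordinate elliptic system for $g_{ij}$ with the conformal degree of freedom and the Cotton hypothesis. Here I expect to bootstrap: the Cotton tensor supplies an additional elliptic equation for the trace-free Ricci (equivalently Schouten) tensor, and in dimension $3$ the vanishing of the Weyl tensor means that conformal flatness is governed entirely by $\C_g$. The strategy is to solve for a conformal factor $u > 0$, $u \in W^{k,q}$, so that the rescaled metric $u^4 g$ satisfies a better elliptic equation; concretely, one uses the conformal transformation law of the Schouten tensor together with the divergence structure of $\C_g$ to show that, after the conformal change, the Ricci curvature (and hence the right-hand side of the harmonic-coordinate equation) gains the extra derivatives dictated by $l$. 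Elliptic regularity for the system $\Delta_g g_{ij} = \cdots$, applied iteratively with Sobolev multiplication estimates (valid since $q > 3$ makes $W^{k,q}$ an algebra in dimension $3$), should then upgrade $g$ from $W^{k,q}$ to $W^{l+3,q}$.

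The second stage is to arrange constant scalar curvature. Once a representative $\hat g \in \llbracket\,g\,\rrbracket_{W^{k,q}}$ of the desired regularity $W^{l+3,q}$ is in hand, I would invoke the solution of the Yamabe problem in this low-regularity setting to find a further conformal factor producing a constant scalar curvature metric $\tilde g$. The point is that the Yamabe conformal factor solves the semilinear elliptic equation $-8\Delta_{\hat g} v + \R_{\hat g} v = \lambda v^{5}$ on the $3$-manifold, and standard elliptic bootstrapping shows that $v$ inherits the regularity of $\hat g$, so that $\tilde g = v^4 \hat g$ remains in $W^{l+3,q}$ while acquiring constant scalar curvature; this is where the ``deep link to the Yamabe problem for rough metrics'' advertised in the abstract enters. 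I expect the main obstacle to lie in the first stage: precisely tracking how the divergence structure of the Cotton tensor, the conformal transformation laws, and the harmonic-coordinate elliptic system interact, so that the full $l+3$ derivatives are genuinely recovered rather than a weaker gain. Controlling the nonlinear lower-order terms via Sobolev multiplication at each step of the bootstrap, and ensuring the conformal factor stays positive and of the correct regularity, will require care, but the dimension-$3$ rigidity (Weyl $\equiv 0$) is what makes the Cotton tensor a sufficient and sharp hypothesis.
\end{proof}
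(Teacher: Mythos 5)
Your proposal assembles the right ingredients (conformal invariance of $\C_g$, harmonic coordinates, elliptic bootstrapping, the rough Yamabe theorem), but it runs them in the wrong order, and this conceals a genuine gap in your first stage. You propose to first find a conformal factor $u$ so that $u^4g$ ``satisfies a better elliptic equation'', and only afterwards invoke the Yamabe problem to normalize the scalar curvature. But you never specify which equation the conformal factor is supposed to solve, and without that the first stage cannot be carried out. The mechanism that converts the hypothesis $\C_g \in W^{l,q}(M)$ into regularity of the metric is the identity
\begin{equation*}
    \Delta_g \Ric_g = \Div_g\C_g + \Rm_g * \Ric_g + \nabla^2 \R_g \,,
\end{equation*}
obtained by taking a divergence of the Cotton tensor and using the Bianchi and Ricci identities (this is \eqref{eq: Ricci PDE full}, justified for rough metrics in \cref{lemma: Cotton equation lowreg}). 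For a generic representative of the conformal class, $\R_g$ is only of class $W^{k-2,q}$, so $\nabla^2\R_g$ lies merely in $W^{k-4,q}$: it is the worst term on the right-hand side and it caps the regularity of $\Delta_g\Ric_g$ no matter how regular $\C_g$ is. No harmonic-coordinate manipulation of a fixed conformal representative removes this obstruction; the only way to remove it is to choose the conformal gauge so that $\nabla^2\R_g \equiv 0$, i.e.\ to pass first to a representative of constant scalar curvature. That is precisely the Yamabe problem for $W^{k,q}$ metrics (\cref{thm: yamabe problem}), so the Yamabe step must come \emph{first}, not last: it is the gauge fixing that makes the bootstrap possible, not a cosmetic final normalization.

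Once this is corrected, your two stages collapse into the paper's proof: by \cref{thm: yamabe problem} there exists $u \in W^{k,q}(M)$, $u > 0$, such that $\hat g = u^4g$ has constant scalar curvature; conformal invariance gives $\C_{\hat g} \in W^{l,q}(M)$; and then in $\hat g$-harmonic coordinates one bootstraps $\Delta_{\hat g}\Ric_{\hat g} = \Div_{\hat g}\C_{\hat g} + \Rm_{\hat g}*\Ric_{\hat g}$ together with the harmonic-coordinate equation for the components $\hat g_{ij}$, and globalizes via Whitney's theorem (this is \cref{thm:Cottonconstantscalar}). Your second stage then becomes vacuous, since the representative produced already has constant scalar curvature. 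Two further points: your appeal to ``Weyl $\equiv 0$, so $\C_g$ governs conformal flatness'' plays no role in this theorem (it only enters the Cotton-flat application, \cref{thm: Cotton flat intro}); and the passage from local harmonic charts to a single global $W^{k+1,q}$ diffeomorphism, which your sketch takes for granted, requires the Whitney-type argument of Step 2 in the proof of \cref{thm: main Ricci}, since the harmonic atlas defines a new differential structure that is only $W^{k+1,q}$-compatible with the original one.
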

We recall that the Cotton tensor is third order in the metric and conformally invariant in dimension $n=3$. In particular, if there exists
$\tilde g \in \llbracket\,g\,\rrbracket_{W^{k,q}}$ such that $\tilde g \in W^{l+3,q}(M)$, then $\C_{\tilde g} \in W^{l,q}(M)$  and  
$\C_g = \Phi^*(\C_{\tilde g}) \in W^{l,q}(M)$, due to the coordinate transformation rule --see \cref{lemma_Adams}-- and the regularity of the diffeomorphism $\Phi \in \mathscr{D}^{k+1, q}(M)$ with $l \leq k$.
In other words, our result completely characterises the existence of more regular metrics in $\llbracket\,g\,\rrbracket_{W^{k,q}}$ within the regularity range stated in the theorem. 

Notice that, even if $\C_g$ was assumed to be in $C^\infty(M)$, one does not generally expect to find metrics in $\llbracket\,g\,\rrbracket_{W^{k,q}}$ with regularity exceeding $W^{k+3,q}(M)$. This limitation stems from the regularity of harmonic coordinates employed in the proof of \cref{thm: main} --see \cref{rmk: Cotton improv obstruction} and \cref{rmk: improvement obstruction} for details--. On the other hand, higher regularity of the Cotton tensor often emerges naturally in the presence of underlying geometric PDEs. Notable examples include Einstein metrics, Cotton-flat metrics, Cotton-parallel metrics, and static manifolds. In these cases, the geometric constraints ensure that it is actually possible to find $C^\infty(M)$ metrics in $\llbracket\,g\,\rrbracket_{W^{k,q}}$. To showcase this, we prove the following result as an application of \cref{thm: main}: 
\begin{corA}[\textbf{Regularity of Cotton flat metrics}]
    \label{thm: Cotton flat intro}
    Let $M$ be an orientable, smooth, closed $3$-manifold and consider a Riemannian metric $g \in W^{2,q}(M)$ with $q > 3$. Suppose that $\C_g \equiv 0$. Then there exists a metric $\tilde g \in \llbracket\,g\,\rrbracket_{W^{2,q}}$ such that $\tilde g \in C^\infty(M)$ and is locally conformal flat.
\end{corA}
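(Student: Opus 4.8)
The plan is to combine a single application of \cref{thm: main} with an elliptic bootstrap that becomes available precisely because the Cotton tensor vanishes. Since $\C_g\equiv 0$ we have in particular $\C_g\in W^{2,q}(M)$, so taking $k=l=2$ in \cref{thm: main} produces a representative $g_1=\Phi_1^*(u_1^4 g)\in\llbracket\,g\,\rrbracket_{W^{2,q}}$ of constant scalar curvature with $g_1\in W^{5,q}(M)$, where $\Phi_1\in\mathscr D^{3,q}(M)$ and $u_1\in W^{2,q}(M)$, $u_1>0$. By the conformal invariance of the Cotton tensor in dimension three together with its transformation law under pullback (\cref{lemma_Adams}), $\C_{g_1}=\Phi_1^*\C_{u_1^4 g}=\Phi_1^*\C_g\equiv 0$. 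Around each point I would then introduce $g_1$-harmonic coordinates; these form an atlas that is $W^{6,q}$-compatible with the given smooth structure and in which the components of $g_1$ lie in $W^{5,q}\hookrightarrow C^{4}$ (for $q>3$), so the curvature of $g_1$ is classically defined.

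The core of the argument is to show that, in these harmonic coordinates, $g_1$ is in fact $C^\infty$. The key observation is that a constant-scalar-curvature, Cotton-flat metric in dimension three has a Ricci tensor obeying a self-contained elliptic equation: constancy of $\R_{g_1}$ gives $\nabla^i(\Ric_{g_1})_{ij}=0$ through the contracted second Bianchi identity, while $\C_{g_1}\equiv 0$ together with $\nabla\R_{g_1}=0$ forces $\Ric_{g_1}$ to be a Codazzi tensor, $\nabla_i(\Ric_{g_1})_{jk}=\nabla_j(\Ric_{g_1})_{ik}$. Commuting covariant derivatives and using that the Weyl tensor vanishes identically in dimension three---so that $\Rm_{g_1}$ is an algebraic quadratic expression in $\Ric_{g_1}$ and the constant $\R_{g_1}$---these two identities yield
\begin{equation*}
    \Delta_{g_1}\Ric_{g_1}=N(g_1,\Ric_{g_1}),
\end{equation*}
with $N$ quadratic in $\Ric_{g_1}$ and depending polynomially on $g_1,g_1^{-1}$. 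Coupling this with the harmonic-coordinate identity $\Delta_{g_1}(g_1)_{ij}=-2(\Ric_{g_1})_{ij}+Q(g_1,\partial g_1)$, $Q$ quadratic in $\partial g_1$, I would run a bootstrap: if $g_1\in W^{m,q}$ with $m\geq 5$, then $N\in W^{m-2,q}$ (as $W^{s,q}$ is an algebra for $sq>3$), so elliptic regularity for the Ricci equation upgrades $\Ric_{g_1}$ to $W^{m,q}$; inserting this into the harmonic-coordinate identity, whose right-hand side then lies in $W^{m-1,q}$, upgrades $g_1$ to $W^{m+1,q}$. Iterating gives $g_1\in\bigcap_m W^{m,q}=C^\infty$ in harmonic coordinates, and applying the same elliptic regularity to the harmonic coordinate functions shows the transition maps of the harmonic atlas to be smooth; thus $g_1$ is a genuinely smooth metric for the harmonic smooth structure $\mathcal A_{\mathrm{harm}}$.

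It remains to exhibit a representative smooth for the \emph{given} smooth structure $\mathcal A_{\mathrm{sm}}$ that still lies in $\llbracket\,g\,\rrbracket_{W^{2,q}}$. The identity map $F\colon(M,\mathcal A_{\mathrm{sm}})\to(M,\mathcal A_{\mathrm{harm}})$ is a $W^{6,q}$ diffeomorphism, while by the uniqueness of smooth structures on closed $3$-manifolds there exists an honest smooth diffeomorphism $\Psi\colon(M,\mathcal A_{\mathrm{sm}})\to(M,\mathcal A_{\mathrm{harm}})$. Setting $\Psi':=F^{-1}\circ\Psi\in\mathscr D^{6,q}(M)$, the tensor $\tilde g:=(\Psi')^*g_1$ coincides with the pullback $\Psi^*g_1$ of the $\mathcal A_{\mathrm{harm}}$-smooth metric $g_1$ by the smooth map $\Psi$, and is therefore smooth on $(M,\mathcal A_{\mathrm{sm}})$. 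Writing $\tilde g=(\Phi_1\circ\Psi')^*(u_1^4 g)$ and checking $\Phi_1\circ\Psi'\in\mathscr D^{3,q}(M)$ confirms $\tilde g\in\llbracket\,g\,\rrbracket_{W^{2,q}}$; finally, a smooth metric with vanishing Cotton tensor in dimension three is locally conformally flat by the classical criterion.

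I expect the main obstacle to be the middle step: recognising and deriving the closed elliptic equation for $\Ric_{g_1}$ that exists exactly because $\C_{g_1}\equiv 0$ and $\R_{g_1}$ is constant, and then carrying out the coupled bootstrap despite the merely Sobolev regularity of the coefficients---this is also why the preliminary application of \cref{thm: main} to reach $W^{5,q}$ is needed before the iteration can begin. By comparison, the passage across smooth structures in the last step, though conceptually delicate, is soft once the harmonic atlas is known to be smooth.
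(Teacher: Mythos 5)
Your proposal is correct and follows the same core strategy as the paper: pass to a constant-scalar-curvature conformal representative, use $\C\equiv 0$ to run an unobstructed elliptic bootstrap for the coupled Ricci/metric system in harmonic coordinates up to $C^\infty$, and conclude via the smoothness of the harmonic atlas (\cref{prop: harmonic atlas}-(\textit{iii})) together with Whitney's theorem (\cref{thm: Withney}). The differences are organizational rather than substantive. First, you invoke \cref{thm: main} as a black box to reach $W^{5,q}$ before starting your iteration, whereas the paper starts directly from the Yamabe gauge (\cref{thm: yamabe problem}) at $W^{2,q}$ and iterates the local arguments of \cref{thm:Cottonconstantscalar}; your detour is logically redundant, but it buys you that all curvature identities hold classically ($W^{5,q}\hookrightarrow C^4$), so you never need the distributional justification of \cref{lemma: Cotton equation lowreg} explicitly (it is of course still used inside \cref{thm: main}). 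Second, you derive the Ricci equation from the Codazzi property of $\Ric$ forced by $\C\equiv 0$ and $\nabla\R=0$, rather than by taking the divergence of the Cotton tensor; together with the vanishing of the Weyl tensor in dimension three this yields the same equation as \eqref{eq: Ricci PDE full} specialized to $\C_g=0$, $\nabla^2\R_g=0$. One step you state too casually: from ``right-hand side of the harmonic-coordinate identity in $W^{m-1,q}$'' you cannot conclude $g_1\in W^{m+1,q}$ by a direct application of \cref{thm: Laplacian_regularity}, since its coefficients are only $W^{m,q}$ and that theorem caps the solution's regularity at the regularity of the coefficient metric; the gain of one derivative beyond the coefficients requires first differentiating the equation and applying elliptic regularity to $\partial_s (g_1)_{ij}$, exactly as in \emph{Step 1} of the proof of \cref{thm: main Ricci}, which you should cite at that point. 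With that reference supplied, your bootstrap closes, and your final smooth-structure crossing and conformal-class membership checks coincide with the paper's argument.
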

\cref{thm: Cotton flat intro} is also the conformal counterpart of a result by M. Taylor \cite{Taylor_ConfFlat}, where he extends to rough metrics the classic assertion that the vanishing of the Riemann tensor implies the manifold is locally isometric to $\nE^n$ --see \cref{subsection: Conformal flatness}--.

We now discuss the arguments and related results that underpin the proof of \cref{thm: main}. The central idea is that the Ricci tensor satisfies an elliptic equation of the form\footnote{The notation $A*B$ stands for some linear combination of the components of $A$, $B$ and the metric $g$.} 
\begin{equation}
    \label{eq: Cotton PDE intro}
    \Delta_g \Ric_g = \Div_g\C_g + \Rm_g * \Ric_g + \nabla^2 \R_g \,,
\end{equation}
where $\Rm_g$, $\Ric_g$ and $\R_g$ denote the Riemann, the Ricci and scalar curvature of $g$, respectively. If the Cotton tensor, which is a conformal invariant, is more regular by hypothesis, then the regularity of $\Delta_g \Ric_g$ is improved, provided that the scalar curvature of $g$ is also more regular. We thereby first move to a conformal metric of constant scalar curvature, which existence is ensured by the recent resolution of the Yamabe problem in the class of metrics of \cref{thm: main} by the authors \cite[Theorem A]{AvalosCogoRoyo}. Once we fix this conformal gauge, the proof of \cref{thm: main} reduces to establishing the following result, which can be regarded as a regularity statement for rough Yamabe metrics:
\begin{restatable}[\textbf{Regularity of constant scalar curvature metrics}]{thm}{thmCottonconstantscalar}
    \label{thm:Cottonconstantscalar}
    Let $M$ be a smooth, closed $n$-manifold and suppose that $g \in W^{k,q}(M)$ with $k \geq 2$ and $q > n$ has constant scalar curvature. If $\C_g \in W^{l,q}(M)$ for some $l \leq k$, then there exists a $W^{k+1,q}$ diffeomorphism $\Phi : M \to M$ such that $\Phi^*g \in W^{l+3,q}(M)$.  
\end{restatable}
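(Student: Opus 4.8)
The plan is to pass to a $g$-harmonic gauge and then run a coupled elliptic bootstrap between the metric and its Ricci tensor, using in an essential way that the scalar curvature is constant. First I would invoke the harmonic coordinate construction for rough metrics: solving $\Delta_g y = 0$ yields, around each point, coordinates of regularity $W^{k+1,q}$ in which $g$ retains its class $W^{k,q}$, and these charts can be assembled into a global $W^{k+1,q}$ self-diffeomorphism $\Phi$ of $M$ (for instance as a $g$-harmonic map into a fixed smooth background metric, homotopic to the identity). This $\Phi$ is the diffeomorphism in the statement, and its regularity $W^{k+1,q}$ — one order above the original $W^{k,q}$ regularity of $g$ — is the best the harmonic coordinate system provides. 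Replacing $g$ by $\Phi^* g$, which is still $W^{k,q}$ and of the same constant scalar curvature by \cref{lemma_Adams}, I may work in harmonic coordinates, where the metric solves the quasilinear elliptic system
\begin{equation*}
    g^{ab}\partial_a\partial_b g_{ij} = -2\Ric_{ij} + Q_{ij}(g,\partial g),
\end{equation*}
with $Q$ a universal expression quadratic in $\partial g$ (and depending on $g$ through $g^{-1}$).

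The second ingredient is the elliptic equation \eqref{eq: Cotton PDE intro} for the Ricci tensor. The crucial point is that $\R_g$ is constant, so $\nabla^2 \R_g = 0$ and that equation collapses to
\begin{equation*}
    \Delta_g \Ric_g = \Div_g \C_g + \Rm_g * \Ric_g .
\end{equation*}
Here the only genuinely inhomogeneous term is $\Div_g \C_g$, which lies in $W^{l-1,q}$ by hypothesis (using $l \le k$ to control the Christoffel contribution), whereas $\Rm_g * \Ric_g$ is governed by the current regularity of the metric via Sobolev multiplication, recalling that $W^{m,q}$ is a Banach algebra for every $m \ge 1$ since $q > n$. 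I would then alternate the two displayed equations. Reading the Ricci equation as an elliptic equation for $\Ric_g$ with $W^{a,q}$ coefficients (where $a$ is the current regularity of $g$) gains two derivatives over its right-hand side, capped at $W^{l+1,q}$ by $\Div_g \C_g$; reading the harmonic-coordinate equation as an elliptic equation for $g$ gains two derivatives over $\min(\mathrm{reg}\,\Ric_g,\ \mathrm{reg}\,Q)$, where the quadratic term $Q$ always lags exactly one order behind $g$.

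Starting from $g \in W^{k,q}$ and $\Ric_g \in W^{k-2,q}$, one pass improves $\Ric_g$ to $W^{\min(l+1,k),q}$ and then $g$ to $W^{\min(l+3,k+1),q}$; feeding the better metric back regenerates more regular coefficients and a more regular $Q$, and a short induction shows the scheme stabilises exactly at $\Ric_g \in W^{l+1,q}$ and $g \in W^{l+3,q}$. Tracking the three ceilings — $\Div_g \C_g \in W^{l-1,q}$, the one-order lag of $Q$, and the two-derivative gain of interior elliptic regularity — confirms that neither equation can push beyond these values, so $W^{l+3,q}$ is the sharp outcome. Crucially, the coefficients $g^{ab}$ improve in tandem with $g$, so at each stage they carry enough regularity for the $L^q$ elliptic estimates and for the Sobolev products to close; this self-improving feature is exactly what makes the coupled bootstrap terminate at the claimed regularity instead of stalling.

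The main obstacle I anticipate is the bookkeeping of this coupled bootstrap: one must check at every stage that the Sobolev multiplication and the $L^q$ Calder\'on--Zygmund estimates genuinely apply with the currently available (and still limited) coefficient regularity, and that the two ceilings interlock to produce precisely $l+3$ rather than something smaller. A secondary but delicate point is the global construction of $\Phi$: harmonic coordinates are intrinsically local, so realising the harmonic gauge as a single $W^{k+1,q}$ self-diffeomorphism of the closed manifold $M$, and verifying it is a diffeomorphism of the asserted regularity, requires the rough harmonic-gauge machinery and is where I would proceed most carefully.
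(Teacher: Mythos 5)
Your local elliptic scheme is essentially the paper's: with $\R_g$ constant, one couples \eqref{eq: Ricci PDE full} with the harmonic-coordinate equation \eqref{eq: Ricci harmonic PDE}, and the alternating bootstrap you describe, with the same three ceilings, stabilizes exactly at $\Ric_g \in W^{l+1,q}$ and $g \in W^{l+3,q}$ in harmonic coordinates. Two technicalities you gloss over are handled explicitly in the paper: for $k=2$ the validity of \eqref{eq: Ricci PDE full} as a distributional identity must be justified by approximation (this is \cref{lemma: Cotton equation lowreg}), and to push the metric past the coefficient regularity one must differentiate \eqref{eq: Ricci harmonic PDE} before applying \cref{thm: Laplacian_regularity}, as in \emph{Step 1} of the proof of \cref{thm: main Ricci}.

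The genuine gap is in the global gauge, and it is not ``secondary''. First, the mechanism you propose for producing $\Phi$ --- a $g$-harmonic map onto a fixed smooth background, homotopic to the identity --- is not available: on a general closed manifold the existence of such a map, let alone its being a diffeomorphism, is unknown (Eells--Sampson requires curvature hypotheses on the target, and injectivity is a separate, delicate issue). Second, and more structurally, your order of operations cannot deliver the conclusion. You fix $\Phi$ at the outset, replace $g$ by $\Phi^*g$, and then bootstrap in harmonic coordinates; but the bootstrap improves only the \emph{components of the metric in harmonic charts}, i.e.\ its regularity with respect to the differential structure generated by the harmonic atlas, and that atlas is only $W^{k+1,q}$-compatible with the original smooth structure of $M$. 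Hence no amount of local improvement lets you read off $\Phi^*g \in W^{l+3,q}(M)$ for a $\Phi$ chosen in advance: transferring regularity back through a $W^{k+1,q}$ change of coordinates caps you at $W^{k,q}$ by \cref{lemma_Adams}. The paper's resolution reverses the order: one first runs the bootstrap, then uses the improved components to show that the harmonic charts are far more compatible with \emph{each other} than with the original structure --- the transition functions are themselves $g$-harmonic, so \cref{prop: harmonic atlas}-(\textit{iii}) upgrades the atlas accordingly --- and only then invokes Whitney's theorem (\cref{thm: Withney}) to smooth this more regular atlas into a structure $M'$ and to produce a diffeomorphism $\Phi : M \to M'$, smooth as a map into $M'$ but only $W^{k+1,q}$ as a self-map of $M$, which pulls $g$ back to a $W^{l+3,q}$ metric. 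In other words, the diffeomorphism is an \emph{output} of the regularity gain, not an input to it; without this step (or a genuinely new global construction) your argument does not close.
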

Notice that the orientability assumption in \cref{thm: main} and \cref{thm: Cotton flat intro} is absent in \cref{thm:Cottonconstantscalar}. This hypothesis is only used to ensure the applicability of the positive mass theorem of \cite{LeeLeF} in the resolution of the Yamabe problem for such metrics, but most likely is not needed, as commented in \cite{AvalosCogoRoyo}.

The proof of \cref{thm:Cottonconstantscalar} consists in exploiting \eqref{eq: Cotton PDE intro} with $\nabla^2 \R_g  =0$ by applying (non-standard) elliptic regularity theory to show that $\Ric_g$ is more regular. At this point, one needs to find an isometry gauge in which the metric is more regular. This is precisely the content of the following result:
\begin{restatable}[\textbf{Regularity of isometry classes}]{thm}{thmD}
    \label{thm: main Ricci}
    Let $M$ be a smooth, closed $n$-manifold and consider a Riemannian metric $g \in W^{k,q}(M)$ for $k \geq 2$ and $q > n$. Suppose that $\Ric_g \in W^{l,q}(M)$ for some $l \leq k$. Then, there exists a $W^{k+1,q}$ diffeomorphism $\Phi : M \to M$ such that $\Phi^*g \in W^{l+2,q}(M)$.
\end{restatable}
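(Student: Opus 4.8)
The plan is to adapt the classical observation of DeTurck and Kazdan that a Riemannian metric attains its optimal regularity in harmonic coordinates, carried out in the $W^{k,q}$ category. First I would reduce the statement to a local one: around each point of $M$ I construct a harmonic coordinate chart and show that in such a chart the components of $g$ improve up to $W^{l+2,q}$, the global diffeomorphism $\Phi$ being assembled from these charts afterwards. To build a chart near a point, I solve locally $\Delta_g x^i = 0$ with $(x^i)$ close to a fixed smooth coordinate system on a small ball. Since $g \in W^{k,q}(M) \hookrightarrow C^{k-1,\alpha}(M)$ for $q>n$, the operator $\Delta_g$ has continuous (indeed Hölder) principal coefficients, so $L^q$-elliptic theory yields $x^i \in W^{k+1,q}$, and on a sufficiently small ball the Jacobian is invertible, so $(x^i)$ is a genuine chart. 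The transformation rules of \cref{lemma_Adams}, together with $l \leq k$, ensure that passing to these coordinates preserves both $g \in W^{k,q}$ and the hypothesis $\Ric_g \in W^{l,q}$, since the coordinate change is one degree more regular than $g$.

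The engine of the argument is the DeTurck--Kazdan identity. In harmonic coordinates the first-order part of $\Delta_g$ drops out, that is $g^{ab}\Gamma^c_{ab}=0$, so that $\Delta_g = g^{ab}\partial_a\partial_b$ acts on functions purely through its principal part, and the metric components satisfy the quasilinear elliptic system
\begin{equation*}
g^{ab}\partial_a\partial_b g_{ij} = -2\,(\Ric_g)_{ij} + Q_{ij}(g,\partial g),
\end{equation*}
where $Q_{ij}$ is a universal contraction, quadratic in $\partial g$ with coefficients rational in $g$. The principal coefficients $g^{ab}$ lie in $W^{k,q}$, while, by the multiplication property of \cref{lemma_Adams} (valid since $q>n$), the quadratic remainder satisfies $Q(g,\partial g) \in W^{k-1,q}$.

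I would then run an elliptic bootstrap based on a non-standard $L^q$-regularity statement for $P = g^{ab}\partial_a\partial_b$: if $Pu \in W^{m,q}$ and $g^{ab}\in W^{k,q}$ with $0 \leq m \leq k$, then $u \in W^{m+2,q}$. This is proved by induction on $m$, differentiating the equation and absorbing the commutator $(\partial g^{ab})\,\partial_a\partial_b u$, the crucial point being that $m \leq k$ forces this product into $W^{m-1,q}$ by \cref{lemma_Adams}. Applying it to the system: if $l \leq k-1$, the right-hand side lies in $W^{\min(l,\,k-1),q} = W^{l,q}$, and one step gives $g \in W^{l+2,q}$. If $l = k$, the right-hand side is a priori only in $W^{k-1,q}$, so a first application yields $g \in W^{k+1,q}$; this upgrades both the principal coefficients to $W^{k+1,q}$ and the quadratic term to $W^{k,q}$, after which a second application reaches $g \in W^{k+2,q} = W^{l+2,q}$.

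Finally, I cover $M$ by finitely many harmonic charts; their transition maps are compositions of $W^{k+1,q}$ coordinate functions, hence again $W^{k+1,q}$ (this class is closed under composition since $q>n$ and $k+1\geq 3$), so they form a $W^{k+1,q}$ atlas in which $g \in W^{l+2,q}$, and realizing this as a single $W^{k+1,q}$ self-diffeomorphism $\Phi$ of $M$ uses that such a regular atlas is $W^{k+1,q}$-compatible with the fixed smooth structure. I expect the main obstacle to be precisely the elliptic regularity with merely $W^{k,q}$ principal coefficients: Schauder theory is unavailable at this regularity and must be replaced by the $L^q$ theory above, whose borderline case $l=k$ — where the full gain of two derivatives is extracted only after the intermediate improvement of the coefficients — is the delicate step, and where the sharp multiplication and composition estimates of \cref{lemma_Adams} are essential.
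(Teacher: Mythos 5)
Your local analysis---harmonic coordinates, the DeTurck--Kazdan equation $g^{ab}\partial_a\partial_b g_{ij} = -2\R_{ij} + Q_{ij}(g,\partial g)$, and the differentiate-and-absorb bootstrap, with the $l=k$ case handled by first upgrading the coefficients---is exactly the paper's \emph{Step 1} and is correct. The genuine gap is in your globalization. You assert that the harmonic charts ``form a $W^{k+1,q}$ atlas in which $g \in W^{l+2,q}$'', obtaining the transition regularity by composing each harmonic chart with the fixed smooth structure. But a merely $W^{k+1,q}$ atlas cannot carry the statement ``$g \in W^{l+2,q}$'' in a coordinate-independent way: by \cref{lemma_Adams}-(\textit{ii}), a $W^{k+1,q}$ change of coordinates preserves only $W^{k,q}$ regularity of tensor components, so in the nontrivial cases $l \in \{k-1,k\}$ (where $l+2 > k$) the regularity you gain in one harmonic chart would be destroyed on passing to an overlapping one, and your conclusion does not survive chart overlaps. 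The missing idea is that transitions between two harmonic charts are strictly better than what composition through the smooth structure gives: each transition function is itself $g$-harmonic, so writing $g^{ij}(y)\,\partial_{y^i}\partial_{y^j} x^l(y) = 0$ and differentiating twice, elliptic regularity shows the transitions lie in $W^{k+2,q}$, and, once the metric components in harmonic coordinates are known to be $W^{l+2,q}$, in $W^{l+4,q}$. This is precisely \cref{prop: harmonic atlas}-(\textit{iii}), and it cannot be skipped.

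Second, even granted the improved atlas, your final sentence does not actually produce the diffeomorphism: ``such a regular atlas is $W^{k+1,q}$-compatible with the fixed smooth structure'' is a compatibility statement, not a construction of a global self-map of $M$. The paper's \emph{Step 2} does this via Whitney's theorem: the $W^{l+4,q}$ atlas embeds into $C^{l+3}$, so by \cref{thm: Withney} it generates a $C^\infty$ structure $M'$ on the underlying manifold, with $g \in W^{l+2,q}(M')$ by \cref{lemma_Adams}-(\textit{i}); uniqueness of smoothings up to $C^\infty$ diffeomorphism then yields a $C^\infty$ diffeomorphism $\Phi : M \to M'$, which, read as a self-map of $M$ with its original structure, is only $W^{k+1,q}$, while $\Phi^*g \in W^{l+2,q}(M)$. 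Without this step (or an explicit substitute), your argument ends with local regularity in charts but never produces the global $\Phi$ that the theorem asserts.
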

Also of independent interest, \cref{thm: main Ricci} is a global version for Sobolev metrics of the well-known observation of Sabitov--Shefel \cite{Sabitov-Shefel} and DeTurck--Kazdan \cite{DeTurck_Kazdan} that the regularity of the metric components in harmonic coordinates have improved regularity, provided that the components of the Ricci tensor are more regular than expected. The global diffeomorphism $\Phi$ is constructed using harmonic coordinates --see \cref{prop: harmonic atlas}-- combined with a careful application of results by H. Whitney \cite{Whitney}. As it happens with harmonic coordinates, the regularity of the diffeomorphism $\Phi$ is only $W^{k+1,q}$, while the metric $\Phi^*g$ is surprisingly better.

Additionally, we present two immediate applications of \cref{thm:Cottonconstantscalar} addressing examples of metrics with constant scalar curvature and
improved Cotton regularity. The first is a global and refined version of a result by J. Corvino \cite{Corvino_static} on static systems --see \cref{Static vacuum systems} for definitions--: 
\begin{corB}[\textbf{Regularity of static systems}]\label{coroll: regularity static systems}
     Let $(M, g, f)$ be a static system with $(g,f) \in W^{k,q}_{loc}(M) \times W^{k,q}_{loc}(M)$ for some $k \geq 2$ and $q > n$. Then, there exists a $W^{k+1,q}_{loc}$ diffeomorphism $\Phi : M \to M$ such that $(\Phi^*g,\Phi^*f) \in C^{\infty}_{loc}(M) \times C^{\infty}_{loc}(M)$.
\end{corB}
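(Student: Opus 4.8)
The plan is to combine two structural features of a static system with \cref{thm:Cottonconstantscalar}: that its scalar curvature is constant, and that its Cotton tensor is one full derivative more regular than a generic $W^{k,q}$ metric would permit. Writing the static equations in the form $\nabla^2 f = f\,\Ric_g - \tfrac{\R_g}{n-1}\,f\,g$ together with the trace identity $(n-1)\,\Delta_g f = -\,f\,\R_g$, a contracted second Bianchi identity argument shows that $\R_g$ is locally constant on $\{f\neq 0\}$ and hence constant on $M$. Thus $\nabla\R_g=0$ and $g$ already meets the scalar-curvature hypothesis of \cref{thm:Cottonconstantscalar}. Solving for the Ricci tensor gives $\Ric_g = \tfrac1f\,\nabla^2 f + \tfrac{\R_g}{n-1}\,g$ on $\{f\neq 0\}$, which is the relation I will differentiate.

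The crux is the improved regularity of $\C_g$. Because $\R_g$ is constant, the Cotton tensor reduces to $\C_{ijk}=\nabla_k\Ric_{ij}-\nabla_j\Ric_{ik}$, and the covariantly constant summand $\tfrac{\R_g}{n-1}g$ disappears under differentiation. Inserting $\Ric_{ij}=f^{-1}\nabla_i\nabla_j f$ produces terms with three derivatives of $f$, but these appear only in the antisymmetric combination $\nabla_k\nabla_i\nabla_j f-\nabla_j\nabla_i\nabla_k f$, which the Ricci identity rewrites as $\Rm_g * \nabla f$. What survives is a pointwise identity of schematic form $\C_g = f^{-1}\,(\Rm_g+\Ric_g+\R_g\,g)*\nabla f$ on $\{f\neq 0\}$, involving only second derivatives of $g$ and a single derivative of $f$. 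Hence, whenever $(g,f)\in W^{m,q}_{loc}$ one gets $\C_g\in W^{m-2,q}_{loc}$ in place of the generic $W^{m-3,q}_{loc}$ — precisely the one-derivative surplus that makes \cref{thm:Cottonconstantscalar} gain ground.

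With these two facts in hand I would bootstrap. Starting from $(g,f)\in W^{k,q}_{loc}$, the surplus gives $\C_g\in W^{k-2,q}_{loc}$ with $k-2\le k$, so \cref{thm:Cottonconstantscalar} produces a $W^{k+1,q}$ diffeomorphism $\Phi$ with $\Phi^*g\in W^{k+1,q}_{loc}$; the pulled-back potential $\Phi^*f$ then gains a derivative from its own elliptic equation $\Delta_g f=-\tfrac{\R_g}{n-1}\,f$. Since $(\Phi^*g,\Phi^*f)$ is again a static system — the equations being tensorial — the hypotheses are restored at one higher level of regularity, and nothing in the argument depends on the value of $k$. Iterating therefore improves $g$ and $f$ by one derivative at each pass, and by Sobolev embedding the outcome is $C^\infty_{loc}$. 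The apparent harmonic-coordinate ceiling highlighted in the introduction is not felt here because the Cotton, equivalently Ricci, input strictly improves at every step; to collect the successive gauges into a single global $W^{k+1,q}_{loc}$ diffeomorphism I would invoke the harmonic atlas of \cref{prop: harmonic atlas}, using that once $\Ric_g$ is smooth the $g$-harmonic coordinate functions are smooth, so one fixed harmonic chart already carries the full regularity while the comparison map retains only the regularity allowed by the original data.

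The hard part will be the zero set $\{f=0\}$: both the inversion $\Ric_g=f^{-1}\nabla^2 f$ and the derived identity for $\C_g$ degenerate there, so the clean scheme above is immediate only on $\{f\neq 0\}$. I would address this with the additional input — available once enough regularity is secured off the zero set — that $\{f=0\}$ is a regular hypersurface along which $\nabla f$ does not vanish; this lets $f$ serve as a local defining function, renders the coupled $(g,f)$-system non-degenerate across the horizon, and, together with a patching of the local gauges, yields the global $C^\infty_{loc}$ statement and the single diffeomorphism on all of $M$.
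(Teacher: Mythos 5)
Your key ideas coincide with the paper's: constancy of the scalar curvature (the paper simply traces the first static equation to get $\R_g = 2\Lambda$), the one-derivative surplus of the Cotton tensor via precisely the identity you derive, $\C_{ijk} = f^{-1}\R_{kj\;\;i}^{\;\;\;\;l}\nabla_l f - f^{-2}\bigl(\nabla_k f\,\nabla_j\nabla_i f - \nabla_j f\,\nabla_k\nabla_i f\bigr)$, which is exactly \cref{lemma: Cotton_static}, and a harmonic-coordinate bootstrap finished off by a Whitney diffeomorphism. However, the way you run the bootstrap has a genuine flaw: you invoke \cref{thm:Cottonconstantscalar} as a black box, and that theorem is stated for \emph{closed} manifolds with global $W^{k,q}$ regularity. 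The corollary assumes neither compactness nor any structure at infinity, and on a closed $M$ the static equations force $\Lambda = 0$, $f$ constant and $\Ric_g \equiv 0$, so the entire content of the statement lies in the noncompact case, where the theorem you cite does not apply. Moreover, even where it would apply, each application produces a new gauge $\Phi$, and reaching $C^\infty$ requires infinitely many rounds; an infinite composition of diffeomorphisms is not a gauge. The paper's proof resolves both problems at once by running the \emph{local} argument from the proof of \cref{thm:Cottonconstantscalar} inside one fixed harmonic chart $(V,\varphi)$: there the coupled system $\Delta_g \R_{ij} = \nabla^k\C_{ijk} + J_{ij} + P_{ij} + Z_{ij}$ and $\Delta_g g_{ij} = -2\R_{ij} + Q_{ij}$, together with $\Delta_g f = -\tfrac{2\Lambda}{n-1}f$ and \cref{lemma: Cotton_static}, lets the regularity of $\R_{ij}$, $g_{ij}$, $f$ and $\C_{ijk}$ improve indefinitely in those \emph{fixed} coordinates; only at the very end does one apply \cref{prop: harmonic atlas}-(\textit{iii}) and \cref{thm: Withney} to extract a single $W^{k+1,q}_{loc}$ diffeomorphism. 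Your closing observation that ``one fixed harmonic chart already carries the full regularity'' is exactly this fix, but it must replace the global iteration, not be appended to it.

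The obstacle to which you devote your final paragraph is a non-issue: in this paper a static potential is by definition a \emph{positive} function, so $\{f = 0\} = \emptyset$ and every division by $f$ is legitimate on all of $M$. As written, your proof ends conditionally on the unproven assertion that $\{f = 0\}$ is a regular hypersurface along which $\nabla f \neq 0$; at $W^{2,q}$ regularity that claim would itself require a careful argument, so this step should simply be deleted, recording $f > 0$ at the outset instead.
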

The second one is a global version for Sobolev metrics of a classical regularity theorem for Einstein manifolds due to DeTurck--Kazdan \cite[Theorem 5.2]{DeTurck_Kazdan}:
\begin{corB}[\textbf{Regularity of Einstein metrics}]\label{coroll: regularity Einstein}
    Let $M$ be a smooth $n$-manifold and let $g \in W^{k,q}_{loc}(M)$ be an Einstein metric on $M$ with $k \geq 2$ and $q > \tfrac{n}{2}$. Then there exists a $W^{k+1,q}_{loc}$ diffeomorphism $\Phi: M \to M $ such that $\Phi^* g \in C^{\infty}_{loc}(M)$.
\end{corB}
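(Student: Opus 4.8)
The plan is to exploit the fact that, for an Einstein metric, the Ricci tensor inherits the full regularity of $g$ and, crucially, \emph{improves together with it}, so that the bootstrap underlying \cref{thm: main Ricci} never saturates and can be iterated all the way up to $C^\infty$. First I would record the algebraic reduction. By definition $\Ric_g = \tfrac1n \R_g\, g$, and the contracted second Bianchi identity $\Div_g \Ric_g = \tfrac12\, d\R_g$ (valid distributionally for $g \in W^{2,q}_{loc}$ with $q > \tfrac n2$) forces $\tfrac{n-2}{2n}\, d\R_g = 0$, so $\R_g$ is a constant $c_0$ for $n \geq 3$. Hence $\Ric_g = c\, g$ with $c = c_0/n$ constant, and in particular $\Ric_g \in W^{k,q}_{loc}(M)$ --- exactly the regularity of $g$ itself. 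This is the decisive structural input: in the general situation of \cref{thm: main Ricci} the fixed hypothesis $\Ric_g \in W^{l,q}$ caps the attainable regularity at $W^{l+2,q}$, whereas here the source term in the elliptic equation for $g$ is tied to $g$ and therefore climbs indefinitely.

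Next I would promote the integrability from $q > \tfrac n2$ to $q > n$, which is needed before the harmonic-coordinate machinery behind \cref{thm: main Ricci} applies. Working in a local harmonic chart (\cref{prop: harmonic atlas}), the DeTurck--Kazdan identity reads $g^{ab}\partial_a\partial_b g_{ij} = -2c\, g_{ij} + Q_{ij}(g,\partial g)$, where $Q$ is quadratic in $\partial g$ with coefficients rational in $g$. For $g \in W^{2,q}_{loc}$ with $q > \tfrac n2$ one has $g \in C^0$, so the operator has continuous coefficients, and $\partial g \in W^{1,q} \hookrightarrow L^{q^{*}}$ with $q^{*} = nq/(n-q)$; thus $Q(g,\partial g) \sim (\partial g)^2 \in L^{q^{*}/2}_{loc}$, and $q^{*}/2 > q$ precisely because $q > \tfrac n2$. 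Local $L^{p}$ elliptic regularity then upgrades $g$ to $W^{2,q_1}_{loc}$ with $q_1 > q$, and finitely many iterations reach an exponent exceeding $n$.

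Once $q > n$ I would run the derivative-gaining bootstrap in harmonic coordinates. Since $W^{m,q}_{loc}$ is a Banach algebra stable under $g \mapsto g^{-1}$ for $q > n$ (\cref{lemma_Adams}), if $g \in W^{m,q}_{loc}$ then $-2c\,g \in W^{m,q}_{loc}$ and $Q(g,\partial g) \in W^{m-1,q}_{loc}$, so the right-hand side lies in $W^{m-1,q}_{loc}$ and elliptic regularity yields $g \in W^{m+1,q}_{loc}$; because the term $-2c\,g$ improves in lockstep with $g$, this gains one derivative at each step and hence $g \in \bigcap_m W^{m,q}_{loc} = C^{\infty}_{loc}$ in every harmonic chart. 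The global diffeomorphism $\Phi$ is then assembled exactly as in the proof of \cref{thm: main Ricci}, from the harmonic atlas together with Whitney's theorem \cite{Whitney}; its regularity is only $W^{k+1,q}_{loc}$ (that of harmonic coordinates for the original metric), while $\Phi^*g$ is smooth. Since $M$ need not be compact, all statements are local and the construction localizes without change, recovering DeTurck--Kazdan \cite[Theorem 5.2]{DeTurck_Kazdan} in the Sobolev category.

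I expect the main obstacle to be the integrability step: one must verify that the quadratic DeTurck--Kazdan nonlinearity strictly improves the Lebesgue exponent, which hinges on the sharp equivalence $q^{*}/2 > q \Leftrightarrow q > \tfrac n2$, and that the harmonic coordinates --- whose own regularity is only $W^{k+1,q}$ --- remain admissible coefficients for the elliptic operator throughout the bootstrap. The derivative-gaining stage is by contrast routine, once one has the Sobolev algebra property and the non-saturating structure $\Ric_g = c\, g$ in hand.
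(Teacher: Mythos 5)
Your core bootstrap is correct, but your route differs from the paper's. The paper deduces this corollary from the static-systems argument (\cref{coroll: regularity static systems}): an Einstein metric has constant scalar curvature and vanishing Cotton tensor, so the authors run the coupled system \eqref{eq: bootstrap system 1}--\eqref{eq: bootstrap system 2} in harmonic charts with $\C \equiv 0$ — improving $\Ric$ through the Cotton equation, then $g$ through the metric equation, feeding back, and concluding with Whitney's theorem. You instead bypass the Cotton tensor entirely: substituting $\Ric_g = c\,g$ (with $c$ constant, which for the paper's definition of Einstein is already assumed, so your Bianchi step is redundant) directly into \eqref{eq: Ricci harmonic PDE} yields a single closed equation whose source term improves in lockstep with $g$, so the bootstrap of \cref{thm: main Ricci} never saturates. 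This is exactly the alternative the authors acknowledge in their closing remark (the corollary ``could be proven just by using the arguments in \cref{thm: main Ricci} since the underlying PDE involves the Ricci tensor''); your version is more elementary for this particular statement, while the paper's Cotton-based route is chosen for conceptual uniformity with the static case, the Cotton tensor being a coordinate-independent carrier of the improved regularity. Minor slip: the Sobolev algebra and inverse-stability properties you use come from the multiplication theory in \cite{AvalosCogoRoyo}, not from \cref{lemma_Adams}, which concerns coordinate changes.

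The genuinely delicate point — which you correctly identify as the main obstacle but do not fully resolve — is the regime $\tfrac{n}{2} < q \leq n$. Your integrability bootstrap ($Q \sim (\partial g)^2 \in L^{q^*/2}_{loc}$ with $q^*/2 > q$ iff $q > \tfrac n2$) is the right mechanism and the exponent arithmetic is correct, but you invoke \cref{prop: harmonic atlas} to place yourself in a harmonic chart, and that proposition — like \cref{thm: Laplacian_regularity} and the distributional validity of \eqref{eq: Ricci harmonic PDE} — is only established in the paper for $q > n$. For $q \leq n$ one must separately construct harmonic coordinates for a metric that is merely $C^{0,\alpha}$, verify that the metric components are still $W^{2,q}_{loc}$ in those coordinates (nontrivial, since the coordinate change is not yet known to be $W^{3,q}$-regular), and check that the harmonic-coordinate Ricci formula holds distributionally there. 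To be fair, the paper's own one-line proof (``the same argument works'') inherits the identical issue, since the \cref{coroll: regularity static systems} argument it references also assumes $q > n$; you at least isolate and partially address the difficulty, but as written this step rests on tools that are not available at the stated integrability.
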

We remark that in \cref{coroll: regularity static systems} and \cref{coroll: regularity Einstein} we do not assume compactness of $M$, nor a structure at infinity; the $loc$ subscript can be removed if $M$ is compact. As in \cref{thm: Cotton flat intro}, an underlying geometric PDE allows avoiding the obstructions outlined in \cref{rmk: improvement obstruction} and improving regularity up to $C^{\infty}$.

\subsection{Outline of the paper}
In \cref{section: Preliminaries} we introduce some fundamental tools needed along the paper and collect the main elliptic regularity theorem we will use. In \cref{section: Regularity of metric structures} we study the regularity of isometry classes for rough metrics and prove \cref{thm: main Ricci}. \cref{section: Regularity of conformal structures} is devoted to conformal classes of rough metrics and the proof of \cref{thm: main}. Finally, in \cref{section: Applications} we present some applications of the main theorem, proving \cref{thm: Cotton flat intro}, \cref{coroll: regularity static systems} and \cref{coroll: regularity Einstein}.

\vspace{1cm}

\section[{\textbf{Preliminaries}}]{Preliminaries}
\label{section: Preliminaries}
\subsection{Sobolev diffeomorphisms and differentiable structures}
When dealing with Riemannian metrics of low regularity, it is convenient to work with a \textit{local} definition of Sobolev spaces. Namely, we say that a tensor field $\mathbf{u}$ is of class $W^{k,p}(M)$, if its components in any coordinate chart $(V,\varphi)$ of $M$ belong to $W^{k,p}_{loc}(\varphi(V))$. These spaces satisfy the usual embedding, density and multiplication properties on compact manifolds. We refer the reader to our previous work \cite[Section 2.1]{AvalosCogoRoyo} for a detailed discussion of these spaces and their relation to other equivalent definitions. In particular, we will make extensive use of the Sobolev multiplication properties in \cite[Appendix A]{AvalosCogoRoyo} --see \cite[Chapter 9]{PalaisBook} and \cite[Chapter VI]{ChoquetDeWitt} for classical references--.

In this paper, an important aspect of tensors fields of Sobolev regularity is the way they transform under rough change of coordinates; for instance, we will make use of harmonic coordinates induced by Riemannian metrics of low regularity, which are naturally not smooth. Hence, we recall in the following fundamental key result:
\begin{proposition}
    \label{lemma_Adams}
     Fix an integer $k \geq 2$ and real numbers $q > \tfrac{n}{2}$ and $1 \leq p \leq q$. Let $\phi: \Omega \to \Omega'$ be a $C^1$-diffeomorphism between two bounded domains in $\nR^n$ and let $\mathbf u$ be a tensor field in $\Omega$.
     \begin{enumerate}
         \item [(i)] If $\phi \in C^{k+1}(\Omega,\Omega')$ and $\mathbf{u} \in W^{k,p}(\Omega)$, then $\phi_* \mathbf{u} \in W^{k,p}_{loc}(\Omega')$. Moreover, if $\mathbf u$ is a scalar field, then $\mathbf{u} \circ \phi^{-1} \in W^{k+1,p}_{loc}(\Omega')$
         
         \item [(ii)] If $\phi \in W^{k+1,q}(\Omega,\Omega')$ and $\mathbf{u} \in W^{k,p}(\Omega)$, then $\phi_*\mathbf{u} \in W^{k,p}_{loc}(\Omega')$. Moreover, if $\mathbf u$ is a scalar field, then $\mathbf{u} \circ \phi^{-1} \in W^{k+1,p}_{loc}(\Omega')$
     \end{enumerate}
\end{proposition}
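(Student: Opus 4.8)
The plan is to reduce the assertion to the pointwise coordinate transformation law for the components of $\phi_*\mathbf u$ and to control the two elementary operations appearing in it. Setting $\psi \coloneqq \phi^{-1}$, the components of the pushforward of an $(r,s)$-tensor field are finite $\nR$-linear combinations of terms of the shape
\[
    \partial\phi^{i_1}\cdots\partial\phi^{i_r}\;\partial\psi^{j_1}\cdots\partial\psi^{j_s}\;(u_\alpha\circ\psi),
\]
where $u_\alpha$ is a scalar component of $\mathbf u$ and each $\partial\phi$, $\partial\psi$ denotes a first-order partial derivative of a component of $\phi$ or $\psi$. Thus only two mechanisms are at play: the \emph{composition} $u_\alpha\circ\psi$ of the scalar components of $\mathbf u$ with the inverse diffeomorphism, and the \emph{multiplication} of the result by the Jacobian factors $\partial\phi$ and $\partial\psi$. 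For a genuine scalar field one has $r=s=0$, so no Jacobian factor is present and $\phi_*\mathbf u=\mathbf u\circ\psi$; this absence of derivative-losing prefactors is precisely what allows the scalar case to retain one more degree of regularity than the general tensor case.

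First I would establish the two auxiliary facts on which everything rests. The first is the \emph{regularity of the inverse}: in case (i) the inverse of a $C^{k+1}$-diffeomorphism is again $C^{k+1}$ by the classical inverse function theorem, while in case (ii) one differentiates the identity to get $D\psi(y)=\bigl[D\phi(\psi(y))\bigr]^{-1}$ and bootstraps, using Cramer's rule together with the Sobolev multiplication and composition estimates, to deduce $\psi\in W^{k+1,q}(\Omega',\Omega)$. The second, and the technical heart of the argument, is a \emph{composition lemma}: if $\psi$ is a $C^{k+1}$ (resp. $W^{k+1,q}$) diffeomorphism and $f\in W^{m,p}_{loc}$ with $0\le m\le k+1$, then $f\circ\psi\in W^{m,p}_{loc}$. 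This is proven by repeatedly applying the chain rule in the Fa\`a di Bruno form, which expresses $D^m(f\circ\psi)$ as a sum of terms $(D^jf\circ\psi)\cdot P_j(D\psi,\dots,D^{m}\psi)$; each such term is then placed in $L^p_{loc}$ by combining the embeddings $W^{k+1,q}\hookrightarrow C^0$ and $W^{k,q}\hookrightarrow C^0$, available for $q>\tfrac n2$ and $k\ge2$ (so that $kq>n$), with the multiplication properties of Sobolev spaces recorded in \cite[Appendix A]{AvalosCogoRoyo}, \cite[Chapter 9]{PalaisBook} and \cite[Chapter VI]{ChoquetDeWitt}. Care is needed to check that the highest-order factors $D^{m}\psi$ and $D^{k}f\circ\psi$ indeed multiply into $L^p_{loc}$, which is where the hypotheses $p\le q$ and $kq>n$ enter.

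With these tools in hand the proof assembles quickly. Applying the composition lemma with $m=k$ to each scalar component $u_\alpha$ yields $u_\alpha\circ\psi\in W^{k,p}_{loc}(\Omega')$. The Jacobian factors satisfy $\partial\phi,\partial\psi\in C^{k}$ in case (i) and $\partial\phi,\partial\psi\in W^{k,q}$ in case (ii); since $kq>n$ forces $W^{k,q}\hookrightarrow L^{\infty}_{loc}$, the Sobolev multiplication $W^{k,q}\cdot W^{k,p}\hookrightarrow W^{k,p}_{loc}$, valid for $p\le q$, lets me multiply the composed components by all the Jacobian prefactors without losing regularity, giving $\phi_*\mathbf u\in W^{k,p}_{loc}(\Omega')$. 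For a scalar field no Jacobian factor occurs, so the pushforward is the bare composition $\mathbf u\circ\psi$, and the composition lemma preserves its regularity up to order $k+1$, yielding $\mathbf u\circ\psi\in W^{k+1,p}_{loc}(\Omega')$ --- one degree better than the tensor case, where the order-$k$ Jacobian prefactors are the bottleneck. I expect the main obstacle to be the composition lemma in case (ii): unlike the smooth case, $\psi$ is only of Sobolev class, so the Fa\`a di Bruno expansion must be justified by approximation and each product of its factors checked against the multiplication theorems to confirm that no derivative is lost; this is also the step that forces the quantitative relations between $k$, $p$, $q$ and $n$.
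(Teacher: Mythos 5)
Your proposal is correct and is essentially the paper's own argument: the paper proves the proposition simply by citing the two ingredients you reconstruct, namely regularity of the inverse (the inverse function theorem in case (i), and the Sobolev inverse function theorem, Theorem B.1 of \cite{AvalosCogoRoyo}, in case (ii)) combined with the tensor transformation rule and a Sobolev composition theorem (\cite[Theorem 3.41]{AdamsFournierBook}; Lemmas 2.1--2.2 of \cite{AvalosCogoRoyo}), whereas you sketch the proofs of these ingredients directly via Fa\`a di Bruno expansions, Cramer's rule and Sobolev multiplication. One caveat, inherited from the statement itself rather than introduced by you: the scalar-field conclusion $\mathbf{u} \circ \phi^{-1} \in W^{k+1,p}_{loc}(\Omega')$ can only hold when the scalar field is itself of class $W^{k+1,p}$ (composition with a fixed diffeomorphism never gains a derivative --- take $\phi = \mathrm{id}$), which is exactly the reading your ``preservation up to order $k+1$'' mechanism supports and the way the clause is actually used in the paper, e.g.\ for the $W^{k+1,q}$ harmonic coordinate functions in \cref{prop: harmonic atlas}.
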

\begin{proof}
    If $\phi \in C^{k+1}(\Omega,\Omega')$, then $\phi^{-1} \in C^{k+1}_{loc}(\Omega',\Omega)$ by the inverse function theorem and assertion (\textit{i}) follows from the transformation rule for tensors together with \cite[Theorem 3.41]{AdamsFournierBook}. If $\phi \in W^{k+1,q}(\Omega,\Omega')$, one can show that $\phi^{-1} \in W^{k+1,q}_{loc}(\Omega',\Omega)$ as well (\cite[Theorem B.1]{AvalosCogoRoyo}) and a similar argument yields (\textit{ii}). See \cite[Lemma 2.1 - Lemma 2.2]{AvalosCogoRoyo} for the details.
\end{proof}
It is direct consequence \cref{lemma_Adams} and our definition of Sobolev spaces that if $\Phi : M \to M'$ is a $W^{k+1,q}$ diffeomorphism between closed manifolds and $\mathbf u \in W^{k,p}(M)$ is a tensor field in $M$ for some $1 \leq p \leq q$, then $\Phi_*\mathbf u \in W^{k,p}(M')$. In particular, if $M'$ is the topological manifold $M$ equipped with a distinct $C^\infty$ differential structure, which is only $W^{k+1,q}$ compatible to the original one, then $\mathbf u \in W^{k,p}(M)$ if and only if $\mathbf u \in W^{k,p}(M')$. In order to avoid working with non $C^\infty$ differential structures, we recall the following classical theorem attributed to H. Whitney (see \cite[Theorem 2.9]{Hirsch} for a modern statement and detailed proof):
\begin{theorem}
\label{thm: Withney}
    Let $M$ be a $C^k$ manifold. If $k \geq 1$, then there exists a $C^\infty$ differential structure on $M$, which is $C^k$ compatible with the original $C^k$ differential structure of $M$ and unique up to $C^\infty$ diffeomorphisms. 
\end{theorem}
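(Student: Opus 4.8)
The plan is to reduce the intrinsic smoothing problem to an extrinsic one in Euclidean space, where the meaning of $C^\infty$ regularity is unambiguous. The conceptual point to keep in mind throughout is that ``$C^\infty$ map out of a $C^k$ manifold'' is \emph{not} a chart-independent notion: composing a locally smooth map with a merely $C^k$ transition function preserves only $C^k$ regularity. For this reason no smoothing can be done intrinsically on $M$; instead, every mollification must be carried out on the image of an embedding inside some $\nR^N$ and then transported back to $M$.

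\textbf{Existence.} First I would invoke the $C^k$ version of the Whitney embedding theorem --- valid because a $C^k$ manifold with $k\ge1$ is paracompact and admits $C^k$ partitions of unity --- to realize $M$ as a $C^k$ submanifold $\Sigma = \iota(M) \subset \nR^N$. Next I would smooth $\Sigma$ inside $\nR^N$: covering $\Sigma$ by open sets in which, after an orthogonal change of coordinates, $\Sigma$ is the graph $z = f(y)$ of a $C^k$ map, I would mollify each graph function $f \rightsquigarrow f_\varepsilon \in C^\infty$ with $f_\varepsilon \to f$ in $C^k$, and patch the resulting local $C^\infty$ graphs by a partition of unity into a single global $C^\infty$ submanifold $\Sigma' \subset \nR^N$ lying $C^k$-close to $\Sigma$. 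The local comparison maps $(y,f(y)) \mapsto (y,f_\varepsilon(y))$ then glue to a global $C^k$ diffeomorphism $h : \Sigma \to \Sigma'$ that is $C^k$-close to the identity. Declaring the charts of $M$ to be the compositions $\psi \circ h \circ \iota$, where $\psi$ ranges over slice charts of the $C^\infty$ submanifold $\Sigma'$, endows $M$ with a $C^\infty$ atlas $\mathcal B$; its $C^k$-compatibility with the original structure $\mathcal A$ is precisely the statement that $\mathrm{id}:(M,\mathcal A)\to(M,\mathcal B)$ is a $C^k$ diffeomorphism, which holds because $h$ and $\iota$ are $C^k$.

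\textbf{Uniqueness.} Suppose $\mathcal B_1$ and $\mathcal B_2$ are two $C^\infty$ structures, each $C^k$-compatible with $\mathcal A$. Then the identity factors as $(M,\mathcal B_1) \xrightarrow{\mathrm{id}} (M,\mathcal A) \xrightarrow{\mathrm{id}} (M,\mathcal B_2)$ with both arrows $C^k$ diffeomorphisms, so $\mathrm{id}: (M,\mathcal B_1)\to(M,\mathcal B_2)$ is a $C^k$ diffeomorphism between genuine $C^\infty$ manifolds. Crucially there is no circularity here: since both source and target are now smooth, the approximation theorem for maps between $C^\infty$ manifolds applies and lets me perturb this map in the strong $C^k$ topology to a $C^\infty$ map. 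Because being a diffeomorphism is an open condition in the strong $C^1$ topology and $k\ge1$, the perturbed map remains a diffeomorphism, yielding the required $C^\infty$ diffeomorphism between $(M,\mathcal B_1)$ and $(M,\mathcal B_2)$.

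\textbf{Main obstacle.} I expect the crux to be the global smoothing of $\Sigma$ in the existence step. Mollification is an inherently local, coordinate-dependent operation, so the genuine difficulty is to glue the local $C^\infty$ graphs --- taken over \emph{different} affine $n$-planes --- into a single global $C^\infty$ submanifold while controlling the $C^k$ distance to $\Sigma$, so that the comparison map $h$ is a bona fide $C^k$ diffeomorphism rather than merely a homeomorphism or a map losing one derivative. Everything downstream (compatibility and uniqueness) is then essentially formal. This global gluing is exactly the technical content abstracted away by the citation to \cite{Hirsch}.
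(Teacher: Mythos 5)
Your uniqueness argument is sound --- it is exactly the standard one: a $C^k$ diffeomorphism between two genuinely $C^\infty$ manifolds can be approximated in the strong $C^k$ topology by $C^\infty$ maps, and diffeomorphisms form an open set in the strong $C^1$ topology --- and the reduction of existence to smoothing the embedded image $\Sigma\subset\nR^N$ is legitimate. The gap is in the one step you describe as ``patch the resulting local $C^\infty$ graphs by a partition of unity into a single global $C^\infty$ submanifold $\Sigma'$.'' As stated, this step fails for precisely the reason you articulate in your own opening paragraph. Any partition of unity subordinate to a cover of $\Sigma$ is, as a family of functions on $\Sigma$, only $C^k$; and even if you take the weights $\varphi_i$ to be restrictions of $C^\infty$ functions on $\nR^N$, the blended map $G(x)=\sum_i\varphi_i(x)\,G_i(x)$, with $G_i(x)=\bigl(\pi_i(x),f_{i,\varepsilon}(\pi_i(x))\bigr)$ and $\pi_i$ the orthogonal projection onto the plane $P_i$, is still being evaluated along the merely $C^k$ set $\Sigma$: it is a $C^k$ map of $\Sigma$ into $\nR^N$, and the image of a $C^k$ submanifold under such a map (indeed even under an ambient $C^\infty$ diffeomorphism) is in general only a $C^k$ submanifold. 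So the ``patched'' $\Sigma'$ is not smooth, and the existence proof collapses at exactly the point you flagged as the crux.

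The missing idea is that the smoothing must be carried out \emph{inductively and relatively}, never simultaneously. Order a locally finite cover of $\Sigma$ by graph regions; at each stage assume $\Sigma$ has already been made $C^\infty$ on a neighborhood of a closed set $A$, write the next region as the graph of $f$ over a plane $P$ (so $f$ is $C^k$ on $V\subset P$ and already $C^\infty$ over the part corresponding to $A$), and replace $f$ by $(1-\chi)f+\chi f_\varepsilon$, where $\chi$ is a $C^\infty$ cutoff on the Euclidean domain $V$. This is $C^\infty$ wherever $\chi=1$ \emph{and} wherever $f$ was already $C^\infty$; the transition zone $0<\chi<1$ does no damage precisely because the induction arranges that $f$ is already smooth there or the region will be smoothed later. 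The modified set is $C^1$-close to $\Sigma$, hence still a graph in the remaining charts, and one iterates; equivalently, each gluing combines two \emph{already smooth} pieces re-graphed over a common plane with a cutoff living on $\nR^n$, never a blend weighted by position on the rough $\Sigma$. This relative-induction scheme is Whitney's. Note also that the proof the paper actually points to, \cite[Theorem 2.9]{Hirsch}, does not smooth an embedded image at all: it works intrinsically, correcting the charts of the $C^k$ atlas one at a time via the relative approximation theorem for maps between open subsets of $\nR^n$ (plus openness of diffeomorphisms in the strong $C^1$ topology), so that each new chart becomes $C^\infty$-compatible with the previously corrected ones. Both correct routes share the ingredient your outline lacks: each smoothing step must be performed relative to, and must preserve, what has already been smoothed.
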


\subsection{Harmonic coordinates and atlases}
Before introducing harmonic coordinates and atlases, we collect the following interior elliptic regularity for the Laplace--Beltrami operator $\Delta_g \coloneqq \Div_g \nabla = \tr_g\nabla^2$ for rough metrics. More general statements and detailed proofs can be found in \cite[Section 3]{AvalosCogoRoyo}.
\begin{theorem}
    \label{thm: Laplacian_regularity}
    Let $\Omega \subset \nR^n$
    be an open, bounded domain with smooth boundary and consider a Riemannian metric $g \in W^{k,q}(\Omega)$ with $k \geq 2$ and $q > n$. 
    \begin{enumerate}
        \item [(i)] If $u \in L^q_{loc}(\Omega)$ and $\Delta_g u \in W^{-1, q}_{loc}(\Omega)$, then $u \in W^{1, q}_{loc}(\Omega)$.
    
        \item [(ii)] If $u \in W^{k-1, q}_{loc}(\Omega)$ and $\Delta_g u \in W^{k-2, q}_{loc}(\Omega)$, then $u \in W^{k, q}_{loc}(\Omega)$.
    \end{enumerate}
\end{theorem}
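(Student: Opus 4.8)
The plan is to rewrite $\Delta_g$ in divergence form and reduce both statements to the $L^q$ theory of second-order elliptic operators with merely continuous leading coefficients, exploiting that $q>n$ forces $g\in W^{k,q}(\Omega)\hookrightarrow C^{k-1,\alpha}(\Omega)$, so the coefficients are Hölder continuous and the operator is uniformly elliptic on relatively compact subdomains. Concretely, setting $a^{ij}\coloneqq\sqrt{\det g}\,g^{ij}\in W^{k,q}(\Omega)$ one has $\sqrt{\det g}\,\Delta_g u=\partial_i\bigl(a^{ij}\partial_j u\bigr)=:Lu$, and since $\sqrt{\det g}$ is bounded above and below by positive constants on relatively compact sets, passing between $\Delta_g u$ and $Lu$ does not affect regularity: multiplication by $\sqrt{\det g}$ and by its reciprocal preserves each of the spaces $W^{-1,q}_{loc}$ and $W^{m,q}_{loc}$ by the Sobolev multiplication properties of \cite{AvalosCogoRoyo}, as $q>n$. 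It therefore suffices to treat the divergence-form operator $L$.

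For (i) I would argue by freezing coefficients together with a Weyl-type regularity statement. Fixing $x_0$ I write $Lv=a^{ij}(x_0)\partial_i\partial_j v+\bigl(L-a^{ij}(x_0)\partial_i\partial_j\bigr)v$; after a linear change of variables the frozen operator is the flat Laplacian, for which the $L^q$--$W^{-1,q}$ theory gives that $a^{ij}(x_0)\partial_i\partial_j\colon W^{1,q}_0(B_r)\to W^{-1,q}(B_r)$ is an isomorphism, while the remainder has small operator norm $W^{1,q}_0(B_r)\to W^{-1,q}(B_r)$ once $r$ is small, by continuity of the $a^{ij}$. A Neumann series then makes $L\colon W^{1,q}_0(B_r)\to W^{-1,q}(B_r)$ invertible on small balls. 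Given $u\in L^q$ with $Lu=F\in W^{-1,q}$, I solve $Lw=F$ with $w\in W^{1,q}_0(B_r)$; the difference $u-w\in L^q(B_r)$ is a very weak solution of $L(u-w)=0$, and interior regularity for the homogeneous divergence-form equation with continuous coefficients (obtained by mollifying and passing to the limit in the uniform interior estimates) upgrades it to $W^{1,q}_{loc}$. Hence $u\in W^{1,q}_{loc}(B_r)$, and a covering argument yields the claim on $\Omega$.

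For (ii) I would induct on $k$. In the base case $k=2$ I pass to non-divergence form, $g^{ij}\partial_i\partial_j u=\Delta_g u-b^k\partial_k u$ with $b^k\coloneqq\tfrac{1}{\sqrt{\det g}}\partial_i(\sqrt{\det g}\,g^{ik})\in W^{1,q}\hookrightarrow L^\infty$; since $u\in W^{1,q}$ gives $\partial_k u\in L^q$, the full right-hand side lies in $L^q$, and the $L^q$ (Calderón--Zygmund / VMO) theory for operators with continuous leading coefficients yields $u\in W^{2,q}_{loc}$. For the inductive step $k\ge 3$ I differentiate $Lu=f$, where $f=\sqrt{\det g}\,\Delta_g u\in W^{k-2,q}$, in a direction $x_s$: writing $v\coloneqq\partial_s u\in W^{k-2,q}$ one gets $Lv=\partial_s f-\partial_i\bigl((\partial_s a^{ij})\partial_j u\bigr)$, and tracking regularities with the Sobolev multiplication properties ($\partial_s a^{ij}\in W^{k-1,q}$ and $\partial_j u\in W^{k-2,q}$, so the product is in $W^{k-2,q}$ and its divergence in $W^{k-3,q}$) shows $\Delta_g v\in W^{k-3,q}$. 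The inductive hypothesis at level $k-1$, whose coefficient assumption holds since $W^{k,q}\subset W^{k-1,q}$, then gives $v\in W^{k-1,q}_{loc}$; as $s$ is arbitrary, $u\in W^{k,q}_{loc}(\Omega)$.

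The main obstacle is (i), namely the endpoint gain from $L^q$ to $W^{1,q}$ below the continuity threshold of $u$: here classical Schauder estimates and naive difference-quotient arguments fail because the coefficients are only continuous and $u$ is not weakly differentiable a priori. Everything rests on the $L^q$/$W^{-1,q}$ solvability for divergence-form operators with continuous (VMO) coefficients and on the Weyl-type regularity of very weak solutions, which is precisely the nonstandard elliptic input recorded in this subsection; the remaining bookkeeping in (ii) is routine once the Sobolev multiplication properties and the base case are in hand.
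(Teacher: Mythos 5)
Your overall architecture --- reduction to the divergence-form operator $L=\partial_i\bigl(a^{ij}\partial_j\,\cdot\,\bigr)$, freezing of coefficients plus a Neumann series to get invertibility $W^{1,q}_0(B_r)\to W^{-1,q}(B_r)$ on small balls, solve-and-subtract for (i), and the differentiate-and-bootstrap induction for (ii) --- is a sound sketch of the standard route, and the bookkeeping in (ii) (Friedrichs-type product rules, the inductive hypothesis applied at level $k-1$ with $g\in W^{k,q}\subset W^{k-1,q}$) checks out. For comparison, the paper does none of this: its proof of \cref{thm: Laplacian_regularity} is a two-line reduction, via the embeddings $L^q_{loc}(\Omega)\hookrightarrow L^{q'}_{loc}(\Omega)$ and $W^{k-1,q}(\Omega)\hookrightarrow W^{k-2,q'}(\Omega)$, to \cite[Theorems 3.2 and 3.3]{AvalosCogoRoyo}, where the actual elliptic analysis is carried out.

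The genuine gap is in part (i), at exactly the step you flag. After solve-and-subtract you have $v=u-w\in L^q(B_r)$ with $\partial_i\bigl(a^{ij}\partial_j v\bigr)=0$ distributionally, and you upgrade it by ``interior regularity for the homogeneous divergence-form equation with continuous coefficients (obtained by mollifying and passing to the limit in the uniform interior estimates).'' As stated, for merely continuous (or VMO) leading coefficients, this is false: Serrin's pathological solutions, and in particular the construction of Jin, Maz'ya and van Schaftingen for genuinely continuous coefficients, produce distributional solutions below the energy class that are not weak solutions, so no mollification argument can close under continuity alone --- the commutator $a^{ij}(\partial_j v)_\epsilon-(a^{ij}\partial_j v)_\epsilon$ does not vanish in the limit without a Dini/Lipschitz modulus. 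What saves the argument in the present setting is precisely that $g\in W^{k,q}$ with $k\ge 2$ and $q>n$ gives $a^{ij}\in C^{1,\alpha}_{loc}$, so that Friedrichs' commutator lemma applies and the uniform interior $W^{1,q}$ estimate (valid for VMO coefficients) can be passed to the limit; alternatively one can run a duality/transposition argument using that $L$ is formally self-adjoint and that the adjoint Dirichlet problem has enough up-to-boundary regularity. Either way, this Hölder regularity of the coefficients must be invoked explicitly --- it is the crux, not a cosmetic refinement of ``continuous.'' Finally, this missing lemma cannot be outsourced to ``the nonstandard elliptic input recorded in this subsection'': the input recorded there is \cref{thm: Laplacian_regularity} itself, so that appeal is circular; the paper discharges it by citing \cite[Section 3]{AvalosCogoRoyo}, which is where the work your sketch omits actually lives.
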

\begin{proof}
    Since $q > n \geq 3$, we have that $L^q_{loc}(\Omega) \hookrightarrow L^{q'}_{loc}(\Omega)$ and assertion (\textit{i}) reduces to the $p = q$ case of \cite[Theorem 3.2-(\textit{i})]{AvalosCogoRoyo}. Similarly, noticing that $W^{k-1,q}(\Omega) \hookrightarrow W^{k-2,q'}(\Omega)$, the assertion (\textit{ii}) reduces to the $p = q$ case of \cite[Theorem 3.3]{AvalosCogoRoyo}.
\end{proof}

Among other important applications, \cref{thm: Laplacian_regularity} allows us to construct harmonic coordinates and \textit{harmonic atlases} using a rough Riemannian metric:
\begin{proposition}
    \label{prop: harmonic atlas} 
    Let $M$ be a smooth manifold of dimension $n \geq 3$ and consider a Riemannian metric $g \in W^{k,q}(M)$ for $k \geq 2$ and $q > n$.
    There exists a collection of charts $\mathcal A_H = \{(V_\beta, \varphi_\beta)\}_{\beta \in B}$ covering $M$ with the following properties.
    \begin{enumerate}
        \item [(i)] The coordinates $\{ x^i_{\beta}\}_{i =1}^n$ induced by any $\varphi_\beta$ are harmonic, that is
        \begin{equation*}
            \Delta_g x^i_{\beta} = 0\,.
        \end{equation*}
        \item [(ii)] For any $\beta \in B$, there holds $\varphi_{\beta} \in W^{k+1, q}(V_{\beta})$. Namely, each chart in $\mathcal A_H$ is $W^{k+1,q}$ compatible with the $C^{\infty}$ differential structure of $M$.
        \item[(iii)] $\mathcal A_H$ forms a $W^{k+2, q}$ atlas. Moreover, if the metric components in harmonic coordinates $\{ x^i_{\beta}\}_{i =1}^n$ are in $W^{l,q}(\varphi_\beta(V_\beta))$ for all $\beta \in B$ and some $l > k$, then $\mathcal A_H$ is a $W^{l+2, q}$ atlas. 
    \end{enumerate}
\end{proposition}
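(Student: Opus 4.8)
The plan is to build the atlas one chart at a time from local solutions of the Laplace equation, and then to extract the regularity statements (i)--(iii) by comparing the form of $\Delta_g$ in two different background coordinate systems. Fix $p \in M$, a smooth chart $(U,\psi)$ with coordinates $y=(y^1,\dots,y^n)$ centred at $p$, and a small coordinate ball $B_r = \psi^{-1}(\{|y|<r\})$. On $B_r$ I would solve the Dirichlet problems $\Delta_g x^i = 0$, $x^i|_{\partial B_r} = y^i$; equivalently, writing $x^i = y^i + v^i$, the corrections solve $\Delta_g v^i = -\Delta_g y^i$ with $v^i|_{\partial B_r}=0$. Because $q>n$ the metric is continuous, so this linear elliptic problem is uniquely solvable and interior estimates place $x^i$ in $C^{1,\alpha}_{loc}(B_r)$. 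A scaling argument forces $v^i \to 0$ in $C^1(B_{r/2})$ as $r\to0$, so for $r$ small the differentials $dx^i = dy^i + dv^i$ are linearly independent at $p$; the inverse function theorem then yields a neighbourhood $V_\beta$ on which $\varphi_\beta = (x^1,\dots,x^n)$ is a chart. By construction $\Delta_g x^i = 0$, which is (i), and covering $M$ by such $V_\beta$ produces $\mathcal A_H$.

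To obtain (ii) I would run elliptic regularity on $\Delta_g x^i = 0$ expressed in the smooth background coordinates, where
\begin{equation*}
    \Delta_g u = g^{jl}\partial_j\partial_l u - g^{jl}\Gamma^m_{jl}\,\partial_m u \,.
\end{equation*}
The top-order coefficients $g^{jl}$ lie in $W^{k,q}$, while the first-order coefficients $-g^{jl}\Gamma^m_{jl}$ lie only in $W^{k-1,q}$. Iterating \cref{thm: Laplacian_regularity} (available at every level $\le k$ since $g\in W^{k,q}$) first raises $x^i$ from $W^{1,q}$ to $W^{k,q}$. To gain the last derivative I would differentiate the equation once: $w=\partial_s x^i$ solves $\Delta_g w = -(\partial_s g^{jl})\,\partial_j\partial_l x^i + \bigl(\partial_s(g^{jl}\Gamma^m_{jl})\bigr)\,\partial_m x^i$, whose right-hand side lies in $W^{k-2,q}$ by the Sobolev multiplication rules; \cref{thm: Laplacian_regularity} then gives $w \in W^{k,q}$, that is $x^i \in W^{k+1,q}$. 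The term $\bigl(\partial_s(g^{jl}\Gamma^m_{jl})\bigr)\partial_m x^i$, in which $\partial\Gamma \in W^{k-2,q}$, is exactly what caps the right-hand side at $W^{k-2,q}$ and blocks any further gain. Thus $\varphi_\beta \in W^{k+1,q}(V_\beta) \hookrightarrow C^1$, proving (ii).

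For (iii) the decisive point is that harmonicity $\Delta_g x^i_\beta = 0$ is coordinate-free, so it persists when rewritten in a second harmonic chart $\varphi_\alpha$. There the first-order coefficient of $\Delta_g$ equals $\Delta_g x^m_\alpha$, which vanishes, so the components of the transition $\varphi_\beta \circ \varphi_\alpha^{-1}$ satisfy the purely second-order equation
\begin{equation*}
    g^{jl}_\alpha\,\partial_j\partial_l\bigl(x^i_\beta\bigr) = 0 \,,
\end{equation*}
with $g^{jl}_\alpha \in W^{k,q}$ by \cref{lemma_Adams}. Being a priori $C^1$, these components can be bootstrapped exactly as before; but now, with no first-order coefficient present, every differentiation of the equation produces only terms of the form $(\partial g_\alpha)\,\partial^2(\cdot)$, never the obstructive $(\partial b)\,\partial(\cdot)$ that capped (ii), so the scheme gains two derivatives and yields $x^i_\beta \in W^{k+2,q}$. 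The same argument applies to the inverse transition, so $\mathcal A_H$ is a $W^{k+2,q}$ atlas. The moreover clause is identical: if the components of $g$ in every harmonic chart are in $W^{l,q}$ with $l>k$, then $g^{jl}_\alpha \in W^{l,q}$ and the same homogeneous equation upgrades the transition maps to $W^{l+2,q}$.

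The main obstacle is the sharp bookkeeping of these elliptic gains: one must track how many times the $W^{k,q}$ and $W^{k-1,q}$ coefficients can be differentiated before leaving $L^q$, and verify the Sobolev multiplication inequalities at each step. The entire subtlety is concentrated in the contrast between the second and third paragraphs above: the first-order Christoffel term, present in smooth background coordinates but absent in harmonic ones, is precisely what separates the $W^{k+1,q}$ regularity of (ii) from the $W^{k+2,q}$ regularity of (iii). By comparison, the construction of the charts --- solvability of the Dirichlet problem and the $C^1$-smallness giving invertibility of $d\varphi_\beta$ --- is routine once $q>n$ makes the coefficients continuous.
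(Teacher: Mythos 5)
Your proposal is correct, and in its decisive part --- item (iii) --- it follows the same route as the paper: harmonicity of the background coordinates kills the first-order term of $\Delta_g$, so the components of the transition maps satisfy the purely second-order equation $g^{jl}_\alpha \partial_j\partial_l x^i_\beta = 0$ with $W^{k,q}$ coefficients, and differentiating this equation and applying \cref{thm: Laplacian_regularity} produces exactly the two extra derivatives (and the $W^{l+2,q}$ upgrade in the moreover clause). Two differences are worth recording. First, the paper does not reprove (i)--(ii): it cites the harmonic-chart construction from the companion paper \cite{AvalosCogoRoyo}, whereas you reconstruct it (Dirichlet problem, $C^1$-smallness of the correction, inverse function theorem, then a one-derivative bootstrap past $W^{k,q}$); your reconstruction is the standard one and your identification of the term $\bigl(\partial_s(g^{jl}\Gamma^m_{jl})\bigr)\partial_m x^i$ as what caps (ii) at $W^{k+1,q}$ is exactly the right structural point. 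Second, and more substantively, the entry point of the bootstrap in (iii): the paper first factors the transition map through a chart of the original smooth structure and invokes \cref{lemma_Adams} to obtain a priori $W^{k+1,q}$ regularity, after which a single application of \cref{thm: Laplacian_regularity} to the twice-differentiated equation suffices; you instead start from the transition maps being merely $C^1$ and bootstrap from the bottom. Your route can be made to work, but it has a gap you should fill: for a function that is only $C^1$, the equation $g^{jl}_\alpha\partial_j\partial_l x^i_\beta = 0$ must be given a distributional meaning, and one must check that harmonicity --- a weak, divergence-form statement $\int \sqrt{\det g_\alpha}\, g^{jl}_\alpha\, \partial_l x^i_\beta\, \partial_j\phi \, dy= 0$ --- is preserved under the $C^1$ change of coordinates, and that \cref{thm: Laplacian_regularity} (stated for $\Delta_g$ with $g\in W^{j,q}$) applies at each intermediate level $2 \le j \le k$, which is legitimate since $g_\alpha \in W^{k,q} \subset W^{j,q}$ but needs saying. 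The paper's composition trick bypasses these low-regularity justifications entirely, which is precisely why it is the cleaner starting point.
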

\begin{proof}
    The existence of a harmonic chart around any point in $M$ which is $W^{k+1, q}$ compatible with the original $C^{\infty}$ differentiable structure was proven in \cite[Proposition 3.13]{AvalosCogoRoyo}. The statements (\emph{i}) and (\emph{ii}) are then a consequence.

    To prove (\emph{iii}), we need to examine the regularity of the transition maps between any two harmonic charts. Let us fix two arbitrary harmonic charts $(V_1, \varphi_1)$ and $(V_2, \varphi_2)$ with nonempty intersection and let $\{x^i\}_{i=1}^n$ and $\{y^i\}_{i=1}^n$ be the coordinates induced by $\varphi_1$ and $\varphi_2$, respectively. Consider also a chart $(\hat V, \hat \varphi)$ of the original $C^{\infty}$ differential structure such that $(V_1 \cap V_2) \subset \hat V$ (or finitely many of them covering $(V_1 \cap V_2)$, if necessary) . We then write the transition map
    \begin{equation*}
        \varphi_1 \circ \varphi_2^{-1} = (\varphi_1 \circ \hat \varphi^{-1}) \circ (\hat \varphi \circ \varphi_2^{-1}) \,,
    \end{equation*}
    where $\varphi_1 \circ \hat \varphi^{-1}$ and $ \hat \varphi \circ \varphi_2^{-1}$ are $W^{k+1, q}$-regular due to (\textit{ii}). By \cref{lemma_Adams}-(\textit{i}), we obtain that $x^i(y) \in W^{k+1, q}_{loc}(\varphi_2(V_1 \cap V_2))$. Now, writing the geometric equation satisfied by $\{x^i\}_{i=1}^n$ in terms of the (harmonic) coordinates $\{y^i\}_{i=1}^n$, we obtain
    \begin{equation*}
        \Delta_g x^l(y) = g^{ij}(y) \frac{\partial^2x^l}{\partial y^i \partial y^j}(y) = 0 \qquad \text{in} \quad \varphi_2(V_1 \cap V_2)
    \end{equation*}
    for each $l = 1, \ldots, n$. Differentiating twice in directions $y^k$ and $y^m$, we obtain
    \begin{equation*}
        g^{ij} \frac{\partial^2}{\partial y^i \partial y^j}\left(\frac{\partial^2 x^l}{\partial y^k\partial y^m}\right) = - \frac{\partial^2 g^{ij}}{\partial y^k\partial y^m} \frac{\partial^2 x^l}{\partial y^i \partial y^j} - \frac{\partial g^{ij}}{\partial y^k} \frac{\partial^3 x^l}{\partial y^i \partial y^j \partial y^m} - \frac{\partial g^{ij}}{\partial y^m} \frac{\partial^3 x^l}{\partial y^i \partial y^j \partial y^k} \,,
    \end{equation*}
    where the right-hand side is in $W^{k-2, q}_{loc}(\varphi_2(V_1 \cap V_2)) \otimes W^{k-1, q}_{loc}(\varphi_2(V_1 \cap V_2)) \hookrightarrow W^{k-2, q}_{loc}(\varphi_2(V_1 \cap V_2))$ for $q > n$. Applying \cref{thm: Laplacian_regularity}-(\textit{ii}), we obtain $\tfrac{\partial^2 x^l}{\partial y^k\partial y^m}(y) \in W^{k, q}_{loc}(\varphi_2(V_1 \cap V_2))$ and consequently $x^l(y) \in W^{k+2, q}_{loc}(\varphi_2(V_1 \cap V_2))$, as desired. Since the choice of harmonic charts was arbitrary, this shows that $\mathcal{A}_{H}$ forms a $W^{k+2, q}$ atlas.

    Finally, were the components $g^{ij}(y) \in W^{l,q}(\varphi_2(V_1 \cap V_2))$ for $l > k$, one could bootstrap the regularity of $x^l(y)$ in the above equation via \cref{thm: Laplacian_regularity}-(\textit{ii}) to $W^{l+2,q}(\varphi_2(V_1 \cap V_2))$.
\end{proof}

\vspace{0.1cm}

\section[{\textbf{Regularity of metric structures}}]{Regularity of metric structures}
\label{section: Regularity of metric structures}

In this section, we prove \cref{thm: main Ricci}. The local theory for $C^{k,\alpha}$ metrics with $k \geq 2$ was done independently by Sabitov--Shefel \cite{Sabitov-Shefel} and DeTurck--Kazdan \cite{DeTurck_Kazdan} in the late seventies. M. Taylor later extended these results to $C^0 \cap W^{1,2}$ metrics \cite[§14, Corollary 12 B.5]{Taylor3}. The key observation is that the metric components, written in harmonic coordinates $\{y^i\}_{i=1}^n$, satisfy the semi-linear elliptic partial differential equation
\begin{equation}
    \label{eq: Ricci harmonic PDE}
    g^{pq}(y)\frac{\partial^2 g_{ij}}{\partial y^p \partial y^q}(y) = -2\R_{ij}(y) +  \,Q_{ij}\bigl(g(y), \partial g(y)\bigr) \,,
\end{equation}
where $Q_{ij}$ is quadratic in $\partial g$. This allowed them to bootstrap the regularity of the components in harmonic coordinates $g_{ij}(y)$ using Schauder theory, 
provided that $\R_{ij}(y)$ is more regular than expected.

The following result is a global version of \cite[Theorem 4.5]{DeTurck_Kazdan} and \cite[Remark 3]{Sabitov-Shefel} for $W^{k,q}$ metrics.

\thmD*
\begin{proof}
    Note that if $l < k-1$, the statement trivially holds with $\Phi = id_M$, so let us assume that $k-1 \leq l$. 
     
    \textit{Step 1.} \, By \cref{prop: harmonic atlas} there exists a finite atlas $\mathcal A_H$ of $M$ consisting of harmonic charts, which is $W^{k+1,q}$-compatible with the $C^\infty$ differential structure of $M$. In any such harmonic chart $(V,\varphi) \in \mathcal A_H$, with coordinates $\{y^i\}_{i=1}^n$, the metric components satisfy \eqref{eq: Ricci harmonic PDE}, where $\R_{ij} \in W^{l,q}_{loc}(\varphi(V))$ and
    \begin{equation*}
        Q_{ij} \in W^{k-1,q}_{loc}(\varphi(V))\otimes W^{k-1,q}_{loc}(\varphi(V)) \hookrightarrow W^{k-1,q}_{loc}(\varphi(V))
    \end{equation*}
    by hypothesis and \cref{lemma_Adams}-(\emph{ii}). Differentiating \eqref{eq: Ricci harmonic PDE} in direction $y^s$ and rearranging, we obtain
    \begin{equation}
        \label{eq: differentiation of harmonic ricci}
        g^{pq}\partial_p\partial_q \bigl(\partial_sg_{ij}\bigl) = -2\,\partial_s\R_{ij} + \partial_sQ_{ij} - \partial_sg^{pq} \, \partial_p\partial_qg_{ij} \,.
    \end{equation}
    Observe that $\partial_s\R_{ij} \in W^{l-1,q}_{loc}(\varphi(V))$, $\partial_sQ_{ij} \in W^{k-2,q}_{loc}(\varphi(V))$ and 
    \begin{equation*}
        \partial_sg^{pq} \, \partial_p\partial_qg_{ij} \in W^{k-1,q}_{loc}(\varphi(V)) \otimes W^{k-2,q}_{loc}(\varphi(V)) \,.
    \end{equation*}
    Due to the multiplication property
    \begin{equation*}
        W^{k-1,q}_{loc}(\varphi(V)) \otimes W^{k-2,q}_{loc}(\varphi(V)) \hookrightarrow W^{k-2,q}_{loc}(\varphi(V)) \,,
    \end{equation*}
    the right-hand-side of \eqref{eq: differentiation of harmonic ricci} is in $W^{k-2,q}_{loc}(\varphi(V))$. Since $\partial_sg_{ij} \in W^{k-1,q}_{loc}(\varphi(V))$, \cref{thm: Laplacian_regularity}-(\textit{ii}) implies that $\partial_sg_{ij} \in W^{k,q}_{loc}(\varphi(V))$ and in turn $g_{ij} \in W^{k+1,q}_{loc}(\varphi(V))$. Furthermore, if $l = k$, we can deduce that the right-hand-side of \eqref{eq: differentiation of harmonic ricci} is actually
    \\

    \noindent
    in $W^{k-1,q}_{loc}(\varphi(V))$ and applying \cref{thm: Laplacian_regularity}-(\textit{ii}) once more, we promote $g_{ij} \in W^{k+2,q}_{loc}(\varphi(V))$. In any case, we have shown that $g_{ij} \in W^{l+2,q}_{loc}(\varphi(V))$.

\vspace{0.1cm}
    \textit{Step 2.} \, In light of \cref{prop: harmonic atlas}-(\textit{iii}), we deduce that the harmonic charts in $\mathcal A_H$ are $W^{l+4,q}$ compatible to each other, and by Sobolev embedding, they form a $C^{l+3}$ differential structure on $M$. On the other hand, it follows from \cref{thm: Withney} that there exists a $C^\infty$ differential structure on $M$ which is $C^{l+3}$ compatible to $\mathcal A_H$. We shall denote by $M'$ the manifold $M$ endowed with this new $C^\infty$ differential structure; we remark that due to the regularity of harmonic coordinates in the original charts of $M$, these two $C^\infty$ differential structures are only $W^{k+1,q}$ compatible. Combining that $g$ is $W^{l+2,q}$ regular in harmonic coordinates with \cref{lemma_Adams}-(\textit{i}), we get that $g \in W^{l+2,q}(M')$. Moreover, \cref{thm: Withney} implies that there exists a $C^\infty$ diffeomorphism $\Phi : M \to M'$. Consequently, $\Phi^*g \in W^{l+2,q}(M)$. Note, however, that $\Phi \in W^{k+1,q}(M,M)$.
\end{proof}

\vspace{0.3cm}
\begin{rmk}
    \label{rmk: improvement obstruction}
     Note that even if $\Ric_g \in C^\infty(M)$, due to the $W^{k+1,q}$ compatibility of any harmonic chart $(V,\varphi)$ with the $C^\infty$ differential structure of $M$, we only have that $\R_{ij} \in W^{k,q}(\varphi(V))$ in harmonic coordinates. Consequently, the regularity of $g$ in harmonic coordinates, and thereby of $\Phi^*g$, can not be improved beyond $W^{k+2,q}$. On the other hand, if we knew that $\R_{ij} \in C^{\infty}(\varphi(V))$ in harmonic coordinates, then we would conclude that $\Phi^*g \in C^\infty(M)$. This is consistent with the obstruction in \cite[Theorem 4.5]{DeTurck_Kazdan}.
\end{rmk}

\begin{rmk}
     The regularity theory developed in \cite[Section 3]{AvalosCogoRoyo} allows for a more general version of \cref{thm: main Ricci}: the hypothesis on the Ricci tensor can be replaced by $\Ric_g \in W^{l,p}$ with $1 < p \leq q$, resulting in $\Phi^* g \in W^{l+3,p}(M)$. The proof is considerably longer, but entails no new ideas.
\end{rmk}

\vspace{0.5cm}

\section[{\textbf{Regularity of conformal structures}}]{Regularity of conformal structures}
\label{section: Regularity of conformal structures}

This section is devoted to the proof of \cref{thm: main}. First, we recall that the \emph{Cotton tensor}
\begin{equation}
    \label{eq: Cotton def}
    \C_{ijk} \coloneqq \nabla_k  \R_{ij} - \nabla_j \R_{ik} + \frac{1}{4} \bigl( \nabla_j \R g_{ik} - \nabla_k \R g_{ij}\bigr) 
\end{equation}
of a smooth Riemannian metric $g$ is conformally invariant in dimension $n = 3$, see \cite{Cotton}.  Taking a divergence and using the Ricci and contracted Bianchi identities, we compute\footnote{We use the convention $\nabla_i\nabla_jX^k - \nabla_j\nabla_iX^k = \R_{ij\;\;l}^{\;\;\;k}X^l$ and $\R_{ij} = g^{pq}\R_{ipjq}$ for curvature.}
\begin{align*}
    \nabla^k \C_{ijk} & = \Delta \R_{ij} - \nabla^k \nabla_j \R_{ik} + \frac{1}{4} \bigl( \nabla^{k} \nabla_j \R \, g_{ik} - \Delta \R \, g_{ij} \bigr) 
    \\
    & = \Delta \R_{ij} - \nabla_j \nabla^k \R_{ki} + \R_{j\,\;k}^{\;kl} \R_{li} + \R_{j\,\;i}^{\;kl} \R_{kl} + \frac{1}{4} \bigl( \nabla_i \nabla_j \R - \Delta \R \, g_{ij} \bigr) 
    \\
    & = \Delta \R_{ij} + \R_i^{\;l}\R_{jl} + \R_{j\,\;i}^{\;kl} \R_{kl} - \frac{1}{4} \bigl(\nabla_i \nabla_j \R + \Delta \R \, g_{ij}\bigr) \,.
\end{align*}
Thus, we find an elliptic equation for the Ricci tensor of the form
\begin{equation}
    \label{eq: Ricci PDE full}
    \Delta_g \Ric_g = \Div_g\C_g + \Rm_g * \Ric_g + \nabla^2 \R_g \,.
\end{equation}
In fact, we claim that \eqref{eq: Ricci PDE full} holds for $g \in W^{2,q}(M)$ metrics, provided that $q > \max\{2,\tfrac{n}{2}\}$.
\begin{lemma}
    \label{lemma: Cotton equation lowreg}
    Let $\Omega \subset \nR^n$ be an open, bounded domain with smooth boundary and $g_{ij} \in W^{2,q}(\Omega)$ a Riemannian metric with $q > \max\{2,\tfrac{n}{2}\}$. Then, 
    \begin{equation*}
        \Delta \R_{ij} = \nabla^k\C_{ijk} - \R_i^{\;l}\R_{jl} - \R_{j\,\;i}^{\;kl} \R_{kl} + \frac{1}{4} \bigl(\nabla_i \nabla_j \R + \Delta \R \, g_{ij}\bigr)
    \end{equation*}
    holds as an equation in $W^{-2,q}(\Omega)$.
\end{lemma}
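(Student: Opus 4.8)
The plan is to establish the formula by a density argument. The displayed identity holds classically for smooth metrics, since it is precisely the computation carried out just above the statement via the Ricci and contracted Bianchi identities. The task is therefore to show that both sides depend continuously on $g$ in an appropriate topology, so that the identity persists in the distributional sense $W^{-2,q}(\Omega)$ for metrics of the stated low regularity. Concretely, I would first record that for $g \in W^{2,q}(\Omega)$ with $q > \tfrac{n}{2}$, the Sobolev embedding $W^{2,q}(\Omega) \hookrightarrow C^1(\Omega)$ guarantees that $g$ is $C^1$ with a continuous positive-definite inverse $g^{ij} \in W^{2,q}(\Omega) \hookrightarrow C^1(\Omega)$, so the Christoffel symbols $\Gamma \in W^{1,q}(\Omega)$ and the curvature tensors $\Rm_g, \Ric_g, \R_g$, built from $\partial \Gamma$ and $\Gamma * \Gamma$, lie in $W^{-1,q}(\Omega) + (W^{1,q})^{*2}$; the extra hypothesis $q > 2$ is exactly what is needed for the quadratic products of first derivatives to make sense and for the multiplication properties to close.

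Next I would check that every term in the asserted identity is a well-defined element of $W^{-2,q}(\Omega)$. The curvature $\Ric_g$ is a priori only in $W^{0,q}=L^q$ through the $\partial\Gamma$ terms after one pairs against test functions (equivalently, $\Ric_g \in W^{-1,q}$ as a raw expression, but it is in fact $L^q$ once the $\partial\Gamma$ piece is integrated by parts against the metric). The point is to track orders: $\R_{ij}$ involves two derivatives of $g$, so $\R_{ij} \in L^q(\Omega)$, hence $\Delta \R_{ij} \in W^{-2,q}(\Omega)$ and $\nabla^2 \R \in W^{-2,q}(\Omega)$ after distributing derivatives onto test functions. The zeroth-order quadratic curvature terms $\R_i^{\;l}\R_{jl}$ and $\R_{j\,\;i}^{\;kl}\R_{kl}$ are products of two $L^q$ objects; using $q > \tfrac{n}{2}$ one has the multiplication $L^q \otimes L^q$ landing in a space embedding into $W^{-2,q}$, which I would invoke from the Sobolev multiplication properties cited earlier. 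Finally $\nabla^k \C_{ijk}$ is the divergence of a third-order expression: $\C_{ijk} \in W^{-1,q}(\Omega)$ since it is one derivative of $\Ric_g$, so its divergence is in $W^{-2,q}(\Omega)$.

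To run the density argument, I would take a sequence $g^{(m)} \in C^\infty(\Omega)$ of smooth metrics with $g^{(m)} \to g$ in $W^{2,q}(\Omega)$ (standard mollification, and positive-definiteness is preserved for large $m$ by $C^1$ convergence). For each $g^{(m)}$ the classical identity holds pointwise, hence as an equation in $W^{-2,q}(\Omega)$. I would then pass to the limit term by term, testing against an arbitrary $\phi \in C_c^\infty(\Omega)$ and integrating by parts to move all derivatives onto $\phi$, so that each term becomes a continuous multilinear functional of $(g^{(m)}, \partial g^{(m)}, \partial^2 g^{(m)})$ paired with derivatives of $\phi$. Continuity of these functionals in the $W^{2,q}$ topology of $g$ follows from the Sobolev embeddings and multiplication lemmas: the linear terms $\Delta \R_{ij}$ and $\nabla^2\R$ are continuous in $\partial^2 g$, while the quadratic terms require convergence of products, which holds because $W^{2,q} \hookrightarrow C^1$ gives $\partial g^{(m)} \to \partial g$ uniformly and $g^{(m)} \to g$ in $C^1$. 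The main obstacle is the bookkeeping for the divergence term $\nabla^k \C_{ijk}$ and the quadratic curvature terms, where one must confirm that the covariant derivatives and curvature contractions, written out in terms of $g$ and its coordinate derivatives, genuinely factor through continuous multiplications into $W^{-2,q}(\Omega)$; this is where the hypothesis $q > \max\{2,\tfrac{n}{2}\}$ is used in full, and I would verify it by invoking the relevant cases of the multiplication properties from \cite[Appendix A]{AvalosCogoRoyo} rather than re-deriving them.
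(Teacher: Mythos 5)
Your overall strategy --- verify the identity for smooth metrics, then pass to $W^{2,q}$ metrics by smooth approximation --- is exactly the paper's strategy, and your bookkeeping of where each term lives ($\R_{ij} \in L^q(\Omega)$, hence $\Delta \R_{ij}, \nabla_i\nabla_j\R \in W^{-2,q}(\Omega)$, and $\nabla^k\C_{ijk}$ the divergence of a $W^{-1,q}$ object) is sound. However, there is a genuine flaw in the step that carries all the analytic weight, namely the continuity of the nonlinear terms in $g$. You invoke the embedding $W^{2,q}(\Omega) \hookrightarrow C^1(\Omega)$ under the hypothesis $q > \max\{2,\tfrac{n}{2}\}$, and you use it twice: to conclude that $g$ and $g^{-1}$ are $C^1$, and, crucially, to get $\partial g^{(m)} \to \partial g$ uniformly so that the quadratic curvature terms converge. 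That embedding requires $q > n$, not $q > \tfrac{n}{2}$: already for $n = 3$ and $2 < q \leq 3$, or for $n = 4$ and $q = \tfrac{5}{2}$, a $W^{2,q}$ metric need not be $C^1$, so neither the uniform convergence of $\partial g^{(m)}$ nor the pointwise control of the Christoffel symbols is available. Since the lemma is stated precisely in this low range (its point is to allow $q$ down to $\max\{2,\tfrac{n}{2}\}$), your argument as written fails in exactly the cases that make the statement nontrivial.

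The repair is to replace uniform convergence by duality-based Sobolev multiplication, and this is the paper's key device: one first shows that the covariant derivative extends to a bounded linear map $\nabla : L^q(\Omega) \to W^{-1,q}(\Omega)$, using $\nabla \mathbf{u} = \partial \mathbf{u} + \Gamma * \mathbf{u}$ and the multiplication $W^{1,q}(\Omega) \otimes L^q(\Omega) \hookrightarrow W^{-1,q}(\Omega)$, valid for $q \geq \tfrac{n}{2}$; similarly $\nabla^2 : L^q(\Omega) \to W^{-2,q}(\Omega)$ is bounded for $q \geq \max\{2,\tfrac{n}{2}\}$. Then, for smooth $\tilde g \to g$ in $W^{2,q}(\Omega)$, one only needs $\tilde\R_{ij} \to \R_{ij}$ in $L^q(\Omega)$ and $\tilde\Gamma^k_{ij} \to \Gamma^k_{ij}$ in $W^{1,q}(\Omega)$ to estimate, for instance, $\|\nabla_k \R_{ij} - \tilde\nabla_k \tilde\R_{ij}\|_{W^{-1,q}(\Omega)} \lesssim \|\R_{ij}-\tilde\R_{ij}\|_{L^q(\Omega)} + \|(\nabla_k - \tilde\nabla_k)\tilde\R_{ij}\|_{W^{1,q}(\Omega)}$, so that the contracted Bianchi identity passes to the limit in $W^{-1,q}(\Omega)$ and the Ricci identity in $W^{-2,q}(\Omega)$; the displayed formula is then an algebraic consequence of these identities, as in the smooth computation preceding the lemma. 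If you reorganize your limiting step around these operator bounds (which are precisely the ``relevant cases of the multiplication properties'' you deferred to) instead of the false $C^1$ embedding, your proof becomes correct and essentially coincides with the paper's.
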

\begin{proof}
    By the above computation, all we need to check is that the Ricci and contracted Bianchi identities hold for $g_{ij}$. First, observe that the covariant derivative (acting on tensor fields) extends to a bounded linear map
    \begin{equation}
        \label{eq: nabla Lq to W-1q}
        \nabla : L^q(\Omega) \to W^{-1,q}(\Omega)
    \end{equation}
    provided that $q \geq \tfrac{n}{2}$. This follows from the expression $\nabla\mathbf u = \partial\mathbf u + \Gamma*\mathbf u$ of its definition and the Sobolev multiplication
    \begin{equation*}
        W^{1,q}(\Omega) \otimes L^q(\Omega) \hookrightarrow W^{-1,q}(\Omega)
    \end{equation*}
    for $q \geq \tfrac{n}{2}$. Now consider a smooth metric $\tilde g_{ij}$ in $\Omega$ and compute
    \begin{align*}
        \|\nabla_k\R_{ij} - \tilde\nabla_k\tilde\R_{ij}\|_{W^{-1,q}(\Omega)} &\leq \|\nabla_k(\R_{ij} - \tilde\R_{ij})\|_{W^{-1,q}(\Omega)} + \|(\nabla_k - \tilde\nabla_k)\tilde\R_{ij}\|_{W^{-1,q}(\Omega)}
        \\
        &\lesssim \|\R_{ij} - \tilde\R_{ij}\|_{L^{q}(\Omega)} + \|(\nabla_k - \tilde\nabla_k)\tilde\R_{ij}\|_{W^{1,q}(\Omega)} \,,
    \end{align*}
    where we have used the boundedness of \eqref{eq: nabla Lq to W-1q} and the embedding $W^{1,q}(\Omega) \hookrightarrow W^{-1,q}(\Omega)$. It follows that if $\tilde g_{ij} \to g_{ij}$ in $W^{2,q}(\Omega)$, then $\tilde \R_{ij} \to \R_{ij}$ in $L^{q}(\Omega)$ and $\tilde \Gamma^k_{ij} \to \Gamma^k_{ij}$ in $W^{1,q}(\Omega)$, and consequently $\tilde\nabla_k\tilde\R_{ij} \to \nabla_k\R_{ij}$ in $W^{-1,q}(\Omega)$. We conclude by smooth approximation that the contracted Bianchi identity
    \begin{equation*}
        \nabla^k\R_{ki} = \frac{1}{2}\nabla_i\R
    \end{equation*}
    holds in $W^{-1,q}(\Omega)$. Similarly, one can show that $\nabla^2 : L^q(\Omega) \to W^{-2,q}(\Omega)$ is a bounded linear map provided that $q \geq \max\{2,\tfrac{n}{2}\}$ and that the Ricci identity
    \begin{equation*}
        (\nabla_k\nabla_l - \nabla_k\nabla_k)\R_{ij} = -\R_{kl\;\;i}^{\;\;\;p}\R_{pj} - \R_{kl\;\;j}^{\;\;\;p}\R_{ip}
    \end{equation*}
    holds in $W^{-2,q}(\Omega)$ (we refer the reader to \cite[Proposition 4.1]{avalos2024sobolev} for the details). This concludes the proof.
\end{proof}
Combining \eqref{eq: Ricci PDE full} with the techniques in the proof of \cref{thm: main Ricci}, we establish the following:
\thmCottonconstantscalar*
\begin{proof}
    If $l < k-2$, the statement is trivially true with $\Phi = id_M$, so let us assume that $k-2 \leq l$. By \cref{prop: harmonic atlas} there exists a finite atlas $\mathcal A_H$ of $M$ consisting of harmonic charts, which is $W^{k+1,q}$-compatible with the $C^\infty$ differential structure of $M$. The geometric equation \eqref{eq: Ricci PDE full}, together with the constant scalar curvature assumption, implies that in any such harmonic chart $(V,\varphi) \in \mathcal A_H$, with coordinates $\{y^i\}_{i=1}^n$, the components of the Ricci tensor satisfy the elliptic system
    \begin{equation}
        \label{eq: Ricci PDE local}
        \Delta_{g}\bigl(\,\R_{ij}\bigr) = g^{kl}\partial_k\partial_l \R_{ij} = \nabla^k\C_{ijk} + J_{ij} + P_{ij} + Z_{ij} \,,
    \end{equation}
    where
    \begin{equation*}
        J = \Gamma*\Gamma*\Ric\,, \qquad \quad  P = \Ric*\Rm + \partial \Gamma*\Ric \,, \qquad \quad Z = \Gamma*\partial\Ric \,.
    \end{equation*}
    By the hypotheses on the regularity of the tensor fields and \cref{lemma_Adams}-(\textit{ii}), we see that $\nabla^k\C_{ijk} \in W^{l-1,q}_{loc}(\varphi(V))$, while
    \begin{align*}
        J_{ij} \in W^{k-1, q}_{loc}(\varphi(V)) \otimes W^{k-2, q}_{loc}(\varphi(V)) & \hookrightarrow W^{k-2, q}_{loc}(\varphi(V)) 
        \\
        P_{ij} \in W^{k-2, q}_{loc}(\varphi(V)) \otimes W^{k-2, q}_{loc}(\varphi(V)) & \hookrightarrow W^{k-2, q/2}_{loc}(\varphi(V))
        \\
        Z_{ij} \in W^{k-1,q}_{loc}(\varphi(V)) \otimes W^{k-3,q}_{loc}(\varphi(V)) & \hookrightarrow W^{k-3,q}_{loc}(\varphi(V)) \,.
    \end{align*}
     Observe that $W^{k-2, q/2}_{loc}(\varphi(V)) \hookrightarrow W^{k-3, q}_{loc}(\varphi(V))$ for $k \geq 3$ and $q > \tfrac{n}{2}$, whereas $L^{q/2}_{loc}(\varphi(V)) \hookrightarrow W^{-1, q}_{loc}(\varphi(V))$ is ensured by $W^{1, q'}_{loc}(\varphi(V)) \hookrightarrow L^{(q/2)'}_{loc}(\varphi(V))$ for $q > n$. We deduce that $\Delta_{g}\bigl(\,\R_{ij}\bigr) \in W^{k-3,q}_{loc}(\varphi(V))$.
    Moreover, by hypothesis and \cref{lemma_Adams}-(\textit{ii}) it holds that $\R_{ij} \in W^{k-2,q}_{loc}(\varphi(V))$. Applying \cref{thm: Laplacian_regularity}-(\textit{i}) for the case $k = 2$ and \cref{thm: Laplacian_regularity}-(\textit{ii}) for $k \geq 3$, we obtain the improved regularity of the Ricci tensor $\R_{ij} \in W^{k-1,q}_{loc}(\varphi(V))$. Proceeding exactly as in \emph{Step 1} of the proof of \cref{thm: main Ricci}, we obtain the improved regularity of the Yamabe metric $g_{ij} \in W^{k+1, q}_{loc}(\varphi(V))$. Now, if $k-1 \leq l$, this regularity of $g_{ij}$ gives an improved regularity  of $J_{ij}, P_{ij}, Z_{ij} \in W^{k-2, q}_{loc}(\varphi(V))$, so that $\Delta_{\hat{g}}\bigl(\,\R_{ij}\bigr) \in W^{k-2,q}_{loc}(\varphi(V))$ and $\R_{ij} \in W^{k-1,q}_{loc}(\varphi(V))$.
    Applying \cref{thm: Laplacian_regularity}-(\textit{ii}), we obtain  $\R_{ij} \in W^{k,q}_{loc}(\varphi(V))$ and proceeding as in \emph{Step 1} of the proof of \cref{thm: main Ricci}, we obtain $g_{ij} \in W^{k+2,q}_{loc}(\varphi(V))$. Finally, if $l = k$, the iteration of the same argument gives $g_{ij} \in W^{k+3,q}_{loc}(\varphi(V))$. This proves, in general, that $g_{ij} \in W^{l+3,q}_{loc}(\varphi(V))$ in any harmonic chart.

    Proceeding as in \emph{Step 2} of the proof of \cref{thm: main Ricci}, we conclude that $\Phi^*g \in W^{l+3,q}(M)$ for some $W^{k+1,q}$ diffeomorphism $\Phi: M \to M$, as desired.
\end{proof}

\vspace{0.2cm}
\begin{rmk}
    \label{rmk: Cotton improv obstruction}
    As in \cref{rmk: improvement obstruction}, the regularity of $\Phi^*g$ can not generally be improved, even if $\C_{g} \in C^\infty(M)$, unless we already knew that $\C_{ijk} \in C^\infty(\varphi(V))$ in every harmonic chart $(V,\varphi)$.
\end{rmk}
\begin{rmk}
    The regularity theory developed in \cite[Section 3]{AvalosCogoRoyo} allows for a more general version of \cref{thm:Cottonconstantscalar} (and \cref{thm: main}): the hypothesis on the Cotton tensor can be replaced by $\C_g \in W^{l,p}$ with $1 < p \leq q$, resulting in $\Phi^*g \in W^{l+3,p}(M)$. The proof is considerably longer but entails no new ideas.
\end{rmk}
Now, we see that \cref{thm: main} reduces to finding a metric of constant scalar curvature in every class $\llbracket\,g\,\rrbracket_{W^{k,q}}$. In other words, solving the Yamabe problem for this class of Riemannian metrics. This was solved by the authors in a previous paper, under an orientability assumption:
\begin{theorem}[\cite{AvalosCogoRoyo}, Theorem A]
    \label{thm: yamabe problem}
    Let $M$ be an orientable, smooth closed $3$-manifold and consider a Riemannian metric $g \in W^{k,q}(M)$ for $k \geq 2$ and $q > 3$. Then, there exists a positive function $u \in W^{k,q}(M)$ such that $u^4g$ has constant scalar curvature.
\end{theorem}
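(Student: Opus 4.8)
The plan is to solve the Yamabe problem variationally, following the classical Yamabe--Trudinger--Aubin--Schoen scheme but carried out in the low-regularity setting dictated by $g \in W^{k,q}(M)$. Writing $\tilde g = u^4 g$ for a positive function $u$, the requirement $\R_{\tilde g} \equiv \lambda$ becomes the Yamabe equation
\begin{equation*}
    L_g u \coloneqq -8\,\Delta_g u + \R_g\, u = \lambda\, u^5 \,,
\end{equation*}
where $L_g$ is the conformal Laplacian and the exponent $6 = \tfrac{2n}{n-2}$ is critical in dimension $n = 3$. Since $q > 3 = n$ forces $g \in C^{1,\alpha}$ and $\R_g \in L^q(M)$, the Yamabe functional
\begin{equation*}
    Q(u) = \frac{\int_M \bigl(8\,|\nabla u|_g^2 + \R_g\, u^2\bigr)\, d\vol_g}{\bigl(\int_M |u|^6\, d\vol_g\bigr)^{1/3}}
\end{equation*}
is well defined and continuous on $W^{1,2}(M) \setminus \{0\}$, the quadratic term being controlled by $\R_g \in L^q$ ($q \geq \tfrac 32$) and $u^2 \in L^3$ via Sobolev embedding. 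I would then set $Y(M,[g]) \coloneqq \inf\{Q(u) : 0 \leq u \in W^{1,2}(M),\, u \not\equiv 0\}$ and aim to show this infimum is attained by a positive function.

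First I would establish existence through the subcritical approximation. For each $p \in (2,6)$ the embedding $W^{1,2}(M) \hookrightarrow L^p(M)$ is compact, so the functional obtained by replacing the exponent $6$ by $p$ attains its infimum $Y_p$ at some $0 \leq u_p \in W^{1,2}(M)$ with $\|u_p\|_{L^p} = 1$, solving $L_g u_p = Y_p\, u_p^{p-1}$ weakly. To promote $u_p$ to a genuine positive solution I would invoke elliptic theory for the rough operator $L_g$: De Giorgi--Nash--Moser estimates for the divergence-form operator with $L^q$ zeroth-order coefficient ($q > \tfrac n2$) give local boundedness and a weak Harnack inequality, hence $u_p > 0$, while \cref{thm: Laplacian_regularity} upgrades the regularity of $u_p$ to that permitted by $L_g$.

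The heart of the argument is passing to the limit $p \to 6$. The family $\{u_p\}$ is bounded in $W^{1,2}(M)$, and since the classical bound $Y(M,[g]) \leq Y(\nS^3)$ always holds, a concentration--compactness dichotomy applies: either $u_p$ converges (weakly, then strongly after bootstrapping) to a positive solution of the critical equation, or the measures $|u_p|^p\, d\vol_g$ concentrate at a point, in which case $Y(M,[g]) = Y(\nS^3)$. Ruling out concentration reduces to proving the strict inequality $Y(M,[g]) < Y(\nS^3)$ whenever $(M,g)$ is not conformal to the round sphere, and I expect this to be the main obstacle. In dimension $3$ the Weyl tensor vanishes identically, so Aubin's local test-function construction is unavailable and the strict inequality must be extracted from the positive mass theorem: using the Green's function $G$ of $L_g$ at a point and its expansion in suitable conformal coordinates, the constant term of $G$ encodes the ADM mass of the asymptotically flat blow-up $G^4 g$, whose positivity yields $Y(M,[g]) < Y(\nS^3)$. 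The delicate point is that $g$ is only $W^{k,q}$, so constructing $G$, its asymptotic expansion, and the mass, and then applying the positive mass theorem, all require the low-regularity version of \cite{LeeLeF}; this is precisely where the orientability hypothesis enters.

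Finally, once a positive weak solution $u \in W^{1,2}(M)$ of $L_g u = \lambda\, u^5$ is in hand, I would run a regularity bootstrap. Moser iteration first yields $u \in L^\infty(M)$, so both $u^5$ and the coefficient $\R_g$ lie in $L^q$, and \cref{thm: Laplacian_regularity} gives $u \in W^{2,q}(M) \hookrightarrow C^{1,\alpha}$. Feeding this back into the equation and matching the coefficient regularity $\R_g \in W^{k-2,q}(M)$, successive applications of \cref{thm: Laplacian_regularity}-(\textit{ii}) together with the Sobolev multiplication properties raise $u$ to $W^{k,q}(M)$, which is optimal relative to the regularity of $g$. This completes the proof.
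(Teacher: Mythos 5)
The paper does not prove this statement itself: it is quoted from the authors' earlier work \cite{AvalosCogoRoyo}, and your sketch reproduces that work's strategy --- the classical subcritical/variational Yamabe scheme with the strict inequality $Y(M,[g]) < Y(\nS^3)$ extracted from the Green's function expansion and the low-regularity positive mass theorem of \cite{LeeLeF}, which is exactly the point where, as this paper remarks, the orientability hypothesis enters. So your proposal is essentially the same approach as the cited proof, with the genuinely delicate steps (the expansion of the Green's function for a $W^{2,q}$ metric and the applicability of the rough positive mass theorem) correctly identified, though only sketched rather than carried out.
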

Finally, we can prove our main theorem:
\thmA*
\begin{proof}[Proof of \cref{thm: main}]
    By \cref{thm: yamabe problem}, there exists some positive function $u \in W^{k,q}(M)$ such that $\hat g \coloneqq u^4g$ has constant scalar curvature. Since the Cotton tensor is conformally invariant, there holds $\C_{\hat g} \in W^{l,q}(M)$ by hypothesis. \cref{thm:Cottonconstantscalar} then implies the assertion.
\end{proof}

\vspace{0.5cm}
\section[{\textbf{Applications}}]{Applications}
\label{section: Applications}

\subsection{Conformal flatness}
\label{subsection: Conformal flatness}
As a first application of \cref{thm: main}, we address the conformal counterpart of a result by M. Taylor. In \cite[Proposition 3.2]{Taylor_ConfFlat}, it is proven that if a Riemannian metric $g_{ij} \in C^{0,\alpha}(\Omega)$ satisfies $\R_{ijkl} \equiv 0$ in $\Omega \subset \nR^n$, then there exists a $C^{1,\alpha}$ local isometry from $(\Omega, g_{ij})$ to an open domain in $\nR^n$. This is a generalization of a classical result to Hölder continuous metrics. The main point of his proof is to avoid reproducing in lower regularity the lengthy proof for smooth metrics using Frobenius theorem (see e.g. \cite[Chapter 27]{EisenhartBook}), and showing instead that the metric is smooth in harmonic coordinates and applying the classical theorem.

We now use \cref{thm: main} to analogously extend to Sobolev metrics the following well-known result\footnote{We point out that on closed $3$-manifolds the condition $\nabla\C_g \equiv 0$ is equivalent to $\C_g \equiv 0$, due to the recent result by I. Terek \cite{Terek}.}:
\begin{theorem}[\cite{EisenhartBook}, Chapter 28]
    \label{thm: Cotton zero equiv loc conf flat}
    Let $(M,g)$ be a smooth, closed Riemannian $3$-manifold. Suppose that $\C_g \equiv 0$. Then $(M,g)$ is locally conformally flat.
\end{theorem}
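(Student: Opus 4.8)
The plan is to prove the statement locally, exhibiting around each point a conformal factor $e^{2f}$ for which the rescaled metric $\bar g = e^{2f} g$ has vanishing Riemann tensor, and then invoking the classical fact that a metric with $\Rm_{\bar g} \equiv 0$ is locally isometric to $\nE^3$; compactness of $M$ plays no role. The feature that makes dimension three special is that the Weyl tensor vanishes identically, so that the full Riemann tensor is algebraically determined by the Schouten tensor $P_{ij} = \R_{ij} - \tfrac14 \R\, g_{ij}$ through the Kulkarni--Nomizu relation
\[
    \R_{ijkl} = P_{ik} g_{jl} + P_{jl} g_{ik} - P_{il} g_{jk} - P_{jk} g_{il} \,.
\]
Consequently $\Rm_{\bar g} \equiv 0$ if and only if the Schouten tensor $\bar P$ of $\bar g$ vanishes, and the problem reduces to solving a single conformal equation for the unknown function $f$.

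First I would record the conformal transformation law of the Schouten tensor: under $\bar g = e^{2f} g$ one has
\[
    \bar P_{ij} = P_{ij} - \nabla_i \nabla_j f + \nabla_i f \, \nabla_j f - \tfrac12 |\nabla f|^2 g_{ij} \,,
\]
where $\nabla$ is the Levi-Civita connection of $g$. Imposing $\bar P \equiv 0$ and setting $\sigma_i \coloneqq \nabla_i f$, this becomes the first-order system
\[
    \nabla_i \sigma_j = \sigma_i \sigma_j - \tfrac12 |\sigma|^2 g_{ij} + P_{ij} \,.
\]
Together with the relation $\sigma_i = \partial_i f$, this is a complete system of total differential equations in the unknowns $(f, \sigma)$, since every first partial derivative $\partial_i f$ and $\partial_i \sigma_j$ is thereby prescribed as a function of $(f, \sigma)$ and the given geometric data. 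The Frobenius theorem supplies a local solution through arbitrary initial values exactly when the compatibility conditions $\partial_k \partial_i \sigma_j = \partial_i \partial_k \sigma_j$ hold as a consequence of the system. Note that the prescribed right-hand side is symmetric in $i, j$, so any solution $\sigma$ is automatically closed and hence genuinely a gradient, consistent with $\sigma = df$.

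The heart of the matter is to compute these compatibility conditions, which I would do covariantly. Differentiating the prescribed expression for $\nabla_i \sigma_j$, substituting the system back in, and antisymmetrising in the two differentiation indices yields one expression for the commutator $\nabla_k \nabla_i \sigma_j - \nabla_i \nabla_k \sigma_j$; the Ricci identity provides a second, in which the Riemann tensor appears contracted against $\sigma$. Equating the two and expanding the curvature through the dimension-three relation above, the terms quadratic in $\sigma$ cancel identically, and the terms linear in $\sigma$ produced by differentiating the system match exactly those coming from the curvature contraction and cancel as well. What survives is the antisymmetrised covariant derivative of the Schouten tensor, $\nabla_k P_{ij} - \nabla_i P_{kj}$, which coincides up to a relabelling of indices with the Cotton tensor $\C_g$. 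Hence the system is Frobenius-integrable precisely when $\C_g \equiv 0$.

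Granting the hypothesis $\C_g \equiv 0$, the Frobenius theorem produces a local solution $f$, so that $\bar g = e^{2f} g$ satisfies $\bar P \equiv 0$ and therefore, by the dimension-three relation, $\Rm_{\bar g} \equiv 0$; thus $\bar g$ is flat and $(M, g)$ is locally conformally flat. The hard part is the bookkeeping in the integrability computation: verifying that, after re-substitution, the purely quadratic and the curvature terms collapse so as to leave only the Cotton tensor. That this collapse must occur can be sanity-checked against the round sphere $S^3$, which is conformally flat with $\C \equiv 0$ and constant Schouten tensor, forcing the spurious algebraic terms to cancel; the cancellation hinges decisively on the vanishing of the Weyl tensor in dimension three, and is where the geometric content of the theorem resides. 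The remaining steps are a direct application of the Frobenius theorem together with the standard characterisation of flat metrics.
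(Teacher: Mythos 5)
Your proposal is correct, and it coincides with the proof this statement actually rests on: the paper does not prove \cref{thm: Cotton zero equiv loc conf flat} itself but cites it from Eisenhart (Chapter 28), and the cited argument is exactly the one you reconstruct — in dimension three the Weyl tensor vanishes, so conformal flatness reduces to annihilating the Schouten tensor $P_{ij} = \R_{ij} - \tfrac14 \R\, g_{ij}$, and the Frobenius integrability condition of the resulting first-order total differential system for $(f,\sigma)$ collapses (the cubic terms in $\sigma$ cancel under antisymmetrisation, and the terms linear in $\sigma$ are exactly matched by the curvature contraction expanded via $\R_{ijkl} = P_{ik}g_{jl}+P_{jl}g_{ik}-P_{il}g_{jk}-P_{jk}g_{il}$) to $\nabla_k P_{ij} - \nabla_j P_{ik} = \C_{ijk} \equiv 0$. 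In other words, you have supplied the classical proof that the paper deliberately cites rather than reproduces, and your integrability computation is consistent with the paper's conventions for the Cotton tensor in \eqref{eq: Cotton def}.
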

Instead of reproducing the original proof in lower regularity, we employ our \cref{thm: main} to find a smooth conformal metric and then apply \cref{thm: Cotton zero equiv loc conf flat}. In particular, we show that conformal classes $\llbracket\,g\,\rrbracket_{W^{k,q}}$ with vanishing Cotton tensor always admit a $C^\infty(M)$ representative, in striking contrast to the obstruction highlighted in \cref{rmk: Cotton improv obstruction}. This occurs because $g$ satisfies a conformally invariant equation.

\begin{corA1}[\textbf{Regularity of Cotton flat metrics}]
     Let $M$ be an orientable, smooth, closed $3$-manifold and consider a Riemannian metric $g \in W^{k,q}(M)$ with $k \geq 2$ and $q > 3$. Suppose that $\C_g \equiv 0$. Then there exists a metric $\tilde g \in \llbracket\,g\,\rrbracket_{W^{k,q}}$ such that $\tilde g \in C^\infty(M)$ and is locally conformal flat. In particular, there exists an open neighborhood $U$ around any $p \in M$ and a $W^{k+1,q}$ conformal diffeomorphism
    \begin{equation*}
        \psi :  (\Omega, \delta) \to (U,g)  \,,
    \end{equation*}
    where $\Omega \subset \nR^n$ and $\delta$ is the Euclidean metric.
\end{corA1}
\begin{proof}
    Let $\hat g \in W^{k,q}(M)$ be a constant scalar curvature metric conformal to $g$, which exists by \cref{thm: yamabe problem}. Due to the conformal invariance of the Cotton tensor, $\C_{\hat g} \equiv 0$. Thus, in the right-hand side of the equation \eqref{eq: Ricci PDE local}, only $J_{ij}, P_{ij}, Z_{ij}$ appear, involving the Ricci tensor and the Christoffel symbols and their derivatives. By iterating the local arguments in the proof of \cref{thm: main}, that is, by bootstrapping with \cref{thm: Laplacian_regularity} and \emph{Step 1} of the proof \cref{thm: main Ricci}, one obtains that $\hat g_{ij} \in C^{\infty}(\varphi(V))$ on any harmonic chart $(V, \varphi)$. Moreover, by \cref{prop: harmonic atlas}-(\textit{iii}), the $\hat g$-harmonic atlas $\mathcal{A}_H$ is itself $C^{\infty}$. Denoting by $M'$ the manifold $M$ endowed with the $C^{\infty}$ atlas $\mathcal{A}_H$, \cref{thm: Withney} implies that there exists a $C^\infty$ diffeomorphism $\Phi : M \to M'$. 
    We conclude that $\Tilde{g} \coloneqq \Phi^*\hat{g} = \Phi^*(u^4 g) \in C^{\infty}(M)$ and $\C_{\Tilde{g}} \equiv 0$ by its conformal invariance. Hence, $\Tilde{g}$ is locally conformally flat by \cref{thm: Cotton zero equiv loc conf flat}, that is, there exists an open neighborhood $\Tilde{U}$ around any $\Tilde{p} \in M$, a function  $\Tilde{u} \in C^{\infty}(\Tilde{U})$ and a $C^{\infty}$ diffeomorphism
    \begin{equation*}
        \Tilde{\psi} :  (\Omega, \delta) \to (\Tilde{U},\Tilde{g})  \,,
    \end{equation*}
    where $\Omega \subset \nR^n$ and $\delta$ is the Euclidean metric, such that $\delta = \Tilde{\psi}^* (\Tilde{u}^4 \Tilde{g})$.

    Finally, we observe that $\Phi \in W^{k+1,q}(M,M)$ by the compatibility of $\mathcal{A}_H$ with the original $C^{\infty}$ differential structure due to \cref{prop: harmonic atlas}-(\textit{ii}). Now, for any $p \in M$, we can consider $\Phi^{-1}(p) = \Tilde{p}$ with an open set $\Tilde{U}$ and a diffeomorphism $\Tilde{\psi}$, as above. Then we define the diffeomorphism
    \begin{equation*}
        \psi :  (\Omega, \delta) \to (U,g)  \,, \qquad \quad U \coloneqq \Phi(\Tilde{U}), \quad \psi \coloneqq \Phi|_{\Tilde{U}} \circ \Tilde{\psi}\,,
    \end{equation*}
    which is $W^{k+1, q}$-regular, due to \cref{lemma_Adams}-(\textit{i}) and satisfies
    \begin{equation*}
        \delta = \Tilde{\psi}^* (\Tilde{u}^4 \Tilde{g}) = \Tilde{\psi}^* \left(\Tilde{u}^4\Phi^*(u^4 g)\right) = (\Tilde{u} \circ \Tilde{\psi})^4 (u \circ \Phi \circ \Tilde{\psi})^4 \Tilde{\psi}^* \Phi^* g = (\Tilde{u} \circ \Tilde{\psi})^4 (u \circ \psi)^4 \,\psi^*  g\,.
    \end{equation*}
    This completes the proof.
\end{proof}

\subsection{Static vacuum systems}
\label{Static vacuum systems}
As a second application, we address the regularity of \emph{static vacuum systems}. We highlight that, as well as in the previous application, a geometric partial differential equation enables an upgrade of the regularity of the metric up to $C^{\infty}$, avoiding the obstruction underlined in \cref{rmk: Cotton improv obstruction} and \cref{rmk: improvement obstruction}.

A \emph{static vacuum system} is a tuple $(M, g, f)$, where $(M, g)$ is a Riemmannian $n$-manifold and $f : M \to \nR$ is a positive function, called \emph{static potential}, satisfying the \emph{static vacuum equations}
\begin{equation}\label{Eq: static vacuum}
    \begin{cases}
        \Ric_g =  \frac{\nabla^2 f}{f} + \frac{2\Lambda}{n-1} g \\ 
        \Delta_g f =  - \frac{2\Lambda}{n-1} f
    \end{cases}
\end{equation}
for some constant $\Lambda \in \nR$. From a physical point of view, a static vacuum system $(M, g, f)$ is equivalent to considering a static Lorentzian manifold
\begin{equation*}
    (\mathfrak{L}, \mathfrak{g}) = \bigl(M \times \nR, -f^2 dt^2 + g\bigr)
\end{equation*}
which satisfies the \emph{Einstein vacuum equations}  
$$\mathfrak{Ric} - \frac{\mathfrak{R}}{2} \mathfrak{g} + \Lambda \mathfrak{g}  = 0.$$ 
The quantity $\Lambda$ is the so-called \emph{cosmological constant} and tracing the first equation in \eqref{Eq: static vacuum} we see that 
\begin{equation}
    \label{eq: Static constant scalar}
    \R_g = 2\Lambda.
\end{equation}
In particular, $g$ is a Yamabe metric. The system \eqref{Eq: static vacuum} also shows up in the study of the adjoint of the linearized scalar curvature operator, see \cite{FischerMarsen_static,Kobayashi_static} for example. If $M$ is closed, integrating the second equation of \eqref{Eq: static vacuum} we deduce that $\Lambda = 0$ and $f$ is a constant function, which in turn implies that $(M,g)$ is Ricci flat. We thus consider more generally static vacuum systems that need not be compact and we do not require any structure at infinity.

First, we note that the Cotton tensor of static vacuum systems is more regular than the expected one.
\begin{lemma}\label{lemma: Cotton_static}
    Let $\Omega \subset \nR^n$ be an open bounded domain and $g_{ij} \in W^{k,q}_{loc}(\Omega)$ and $f \in W^{k,q}_{loc}(\Omega)$ a Riemannian metric and a positive function with $k \geq 2$ and $q > n$ satisfying \eqref{Eq: static vacuum}. Then, $\C_{ijk} \in W^{k-2,q}_{loc}(\Omega)$.
\end{lemma}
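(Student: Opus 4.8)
The plan is to exploit the static equations \eqref{Eq: static vacuum} to rewrite the Cotton tensor in a form in which its apparent third-order dependence on $f$ collapses to first order via the Ricci commutation identity, thereby gaining one degree of regularity over the generic estimate $\C_g \in W^{k-3,q}_{loc}(\Omega)$ (since $\R_{ij} \in W^{k-2,q}_{loc}(\Omega)$ and $\C_g$ is one derivative of $\Ric_g$).

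First I would use that the scalar curvature is constant: tracing the first equation in \eqref{Eq: static vacuum} gives $\R_g = 2\Lambda$ --see \eqref{eq: Static constant scalar}--, so $\nabla \R_g \equiv 0$ and the last two terms in the definition \eqref{eq: Cotton def} of the Cotton tensor drop out. Hence $\C_{ijk} = \nabla_k \R_{ij} - \nabla_j \R_{ik}$, an identity valid in $W^{k-3,q}_{loc}(\Omega)$. I would then substitute the first static equation $\R_{ij} = f^{-1}\nabla_i\nabla_j f + \tfrac{2\Lambda}{n-1}g_{ij}$, which is a genuine $W^{k-2,q}_{loc}(\Omega)$ identity: since $q > n$ forces $W^{k,q}_{loc}(\Omega) \hookrightarrow C^0$, the positive function $f$ is continuous and locally bounded below, so $f^{-1}, f^{-2} \in W^{k,q}_{loc}(\Omega)$, while $\nabla^2 f \in W^{k-2,q}_{loc}(\Omega)$. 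Applying the product rule together with $\nabla_k(f^{-1}) = -f^{-2}\nabla_k f$ yields the schematic expression
\begin{equation*}
    \C_{ijk} = \frac{1}{f}\bigl(\nabla_k\nabla_i\nabla_j f - \nabla_j\nabla_i\nabla_k f\bigr) - \frac{1}{f^2}\bigl(\nabla_i\nabla_j f\,\nabla_k f - \nabla_i\nabla_k f\,\nabla_j f\bigr)\,.
\end{equation*}

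The crucial observation is that the first, apparently top-order, term is in fact only first order in $f$. Writing $\omega = df$, the Hessian $\nabla_i\nabla_j f = \nabla_i\omega_j$ is symmetric and $\omega$ is closed, so commuting covariant derivatives through the Ricci identity produces
\begin{equation*}
    \nabla_k\nabla_i\nabla_j f - \nabla_j\nabla_i\nabla_k f = \nabla_i\bigl(\nabla_k\omega_j - \nabla_j\omega_k\bigr) + \Rm_g * \nabla f = \Rm_g * \nabla f\,,
\end{equation*}
since $\nabla_k\omega_j - \nabla_j\omega_k = 0$. The third-derivative contribution cancels and one is left with $\C_g = f^{-1}\,\Rm_g * \nabla f - f^{-2}\,\nabla^2 f * \nabla f$. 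A regularity count then closes the argument: for $q > n$ one has $f^{-1},f^{-2} \in W^{k,q}_{loc}(\Omega)$, $\nabla f \in W^{k-1,q}_{loc}(\Omega)$, $\nabla^2 f \in W^{k-2,q}_{loc}(\Omega)$ and $\Rm_g \in W^{k-2,q}_{loc}(\Omega)$, and the Sobolev multiplication properties place both summands in $W^{k-2,q}_{loc}(\Omega)$.

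The main obstacle is to make the commutation identity rigorous at the lowest regularity $k = 2$, where $\nabla^2 f \in L^q_{loc}(\Omega)$ and the triple derivative $\nabla^3 f$ exists only in $W^{-1,q}_{loc}(\Omega)$. As in the proof of \cref{lemma: Cotton equation lowreg}, I would establish $\nabla_k\nabla_i\nabla_j f - \nabla_j\nabla_i\nabla_k f = \Rm_g * \nabla f$ as an equality in $W^{-1,q}_{loc}(\Omega)$ by smooth approximation of $g$ and $f$ --here the approximants need not satisfy \eqref{Eq: static vacuum}, as this is a general geometric identity-- and then read off that the combination in fact lies in $L^q_{loc}(\Omega)$ because its right-hand side does. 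The same approximation argument legitimises the distributional product rule used above; for $k \geq 3$ the manipulations are classical modulo the routine Sobolev multiplication bookkeeping.
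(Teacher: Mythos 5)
Your proof is correct and follows essentially the same route as the paper's: substitute the static equation into $\C_{ijk} = \nabla_k \R_{ij} - \nabla_j \R_{ik}$ (the scalar curvature terms vanishing since $\R_g = 2\Lambda$), cancel the apparent third derivatives of $f$ via the Ricci commutation identity to arrive at $\C_g = f^{-1}\,\Rm_g * \nabla f - f^{-2}\,\nabla^2 f * \nabla f$, and conclude by Sobolev multiplication. Your explicit smooth-approximation argument justifying the commutation identity at the lowest regularity $k=2$ is a point of rigor that the paper's computation leaves implicit.
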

\begin{proof}
    Using the first equation of \eqref{Eq: static vacuum}, \eqref{eq: Static constant scalar}
    and the definition of the Riemann tensor, we compute the Cotton tensor \eqref{eq: Cotton def}
    \begin{align*}
       \C_{ijk} & = \nabla_k  \left( \frac{\nabla_i \nabla_j f}{f} +  \Lambda g_{ij}\right) - \nabla_j \left( \frac{\nabla_i \nabla_k f}{f} +  \Lambda g_{ik}\right) 
       \\
       & = f^{-1} \left( \nabla_k \nabla_j - \nabla_j \nabla_k \right) \nabla_i f - f^{-2} \left( \nabla_k f \cdot \nabla_j  \nabla_i f - \nabla_j f \cdot \nabla_k  \nabla_i f \right)  
       \\
       & = f^{-1} \R_{kj \;\; i}^{\;\;\;\;l} \nabla_l f - f^{-2} \left( \nabla_k f \cdot \nabla_j  \nabla_i f - \nabla_j f \cdot \nabla_k  \nabla_i f \right) \,.
    \end{align*}
    Since the $\R_{ijkl} \in W^{k-2,q}_{loc}(\Omega)$, the Sobolev multiplication 
    \begin{equation*}
        W^{k,q}_{loc}(\Omega) \times W^{k-1,q}_{loc}(\Omega) \times W^{k-2,q}_{loc}(\Omega) \hookrightarrow W^{k-2,q}_{loc}(\Omega)
    \end{equation*}
    ensures that $\C_{ijk} \in W^{k-2}_{loc}(\Omega)$.
\end{proof}
In \cite{Corvino_static}, J. Corvino showed that the metric components of a $C^2$ static system are analytic in local harmonic coordinates. In the following, we establish a global version thereof for metrics of lower regularity:
\begin{corB1}[\textbf{Regularity of static systems}]
     Let $(M, g, f)$ be a static system with $(g,f) \in W^{k,q}_{loc}(M) \times W^{k,q}_{loc}(M)$ for some $k \geq 2$ and $q > n$. Then, there exists a $W^{k+1,q}_{loc}$ diffeomorphism $\Phi : M \to M$ such that $(\Phi^*g,\Phi^*f) \in C^{\infty}_{loc}(M) \times C^{\infty}_{loc}(M)$.
\end{corB1}
\begin{proof}
    Fix a harmonic chart $(V,\varphi)$ around any point in $M$ and notice, as in \eqref{eq: Ricci PDE local} and \eqref{eq: Ricci harmonic PDE}, that the components of the Ricci and the metric tensor satisfy the system
    \begin{align}
        \label{eq: bootstrap system 1}
        \Delta_{g}\bigl(\,\R_{ij}\bigr) &= \nabla^k\C_{ijk} + J_{ij} + P_{ij} + Z_{ij}
        \\
        \label{eq: bootstrap system 2}
        \Delta_g\bigl(g_{ij}\bigr) &= -2\R_{ij} +  \,Q_{ij}
    \end{align}
    in $\varphi(V)$, where
    \begin{align*}
        J = \Gamma*\Gamma*\Ric\,, \quad  P = \Ric*\Rm + \partial \Gamma*\Ric \,, \quad Z = \Gamma*\partial\Ric \,, \quad Q = \partial g * \partial g \,.
    \end{align*}
    Again, the first equation is justified by \cref{lemma: Cotton equation lowreg}. By the same arguments as in the proof of \cref{thm:Cottonconstantscalar} we see that $J_{ij},P_{ij},Z_{ij},Q_{ij} \in W^{k-3,q}_{loc}(\varphi(V))$, while \cref{lemma: Cotton_static} implies that $C_{ijk} \in W^{k-2}_{loc}(\varphi(V))$. It is then a consequence of \cref{thm: Laplacian_regularity} applied to \eqref{eq: bootstrap system 1} that $\R_{ij} \in W^{k-1,q}_{loc}(\varphi(V))$. Now, differentiating \eqref{eq: bootstrap system 2} and applying \cref{thm: Laplacian_regularity} like in the proof of \cref{thm: main Ricci} we deduce that $g_{ij} \in W^{k+1,q}_{loc}(\varphi(V))$. Moreover, by the second equation in \eqref{Static vacuum systems} and \cref{thm: Laplacian_regularity}, $f \in W^{k+1,q}_{loc}(\varphi(V))$. In turn, this implies that $J_{ij},P_{ij},Z_{ij},Q_{ij} \in W^{k-2,q}_{loc}(\varphi(V))$ by Sobolev multiplications and $\C_{ijk} \in W^{k-1,q}_{loc}(\varphi(V))$ by \cref{lemma: Cotton_static}, so we may bootstrap the argument to conclude that $g_{ij} \in C^\infty_{loc}(\varphi(V))$ and $f \in C^{\infty}_{loc}(\varphi(V))$.

    We may now consider one such harmonic neighborhood $(V_p,\varphi_p)$ around each point $p \in M$ and appeal to \cref{prop: harmonic atlas}-(\textit{iii}) to conclude that $\mathcal A = \{(V_p,\varphi_p)\}_{p \in M}$ forms a $C^\infty$ atlas on $M$. We name $M'$ the $C^1$ manifold $M$ endowed with the $C^\infty$ differential structure generated by $\mathcal A$ and remark that $g \in C^\infty_{loc}(M')$. By \cref{thm: Withney}, there exists some $C^\infty$ diffeomorphism $\Phi : M \to M'$. Therefore, $\Phi^*g \in C^\infty_{loc}(M)$ and $\Phi^*f \in C^\infty_{loc}(M)$, as desired. 
\end{proof}
Naturally, the same argument works for Einstein manifolds, that is, manifolds satisfying $\Ric_g = \lambda g$ for some constant $\lambda$ and hence, in particular, the Cotton vanishes. It may be regarded as a global version of \cite[Theorem 5.2]{DeTurck_Kazdan} to Sobolev metrics:
\begin{corB2}[\textbf{Regularity of Einstein metrics}]
    Let $M$ be a smooth $n$-manifold and let $g \in W^{k,q}_{loc}(M)$ be an Einstein metric on $M$ with $k \geq 2$ and $q > \tfrac{n}{2}$. Then there exists a $W^{k+1,q}_{loc}$ diffeomorphism $\Phi: M \to M $ such that $\Phi^* g \in C^{\infty}_{loc}(M)$.
\end{corB2}
We remark that \cref{coroll: regularity static systems} and \cref{coroll: regularity Einstein} could be proven just by using the arguments in \cref{thm: main Ricci} since the underlying PDE involves the Ricci tensor. Nevertheless, while Ricci has a priori only the expected regularity, we identify a geometric quantity, that is the Cotton tensor, carrying the improved regularity globally and in a coordinate-independent way. 
We believe that this observation can be a powerful tool elsewhere.

\vspace{0.5cm}

\printbibliography[]

\vspace{0.8cm}

\end{document}